\setlist[itemize]{noitemsep} 
\renewcommand\thesection{\Roman{section}} 
\renewcommand\thesubsection{\roman{subsection}} 
\titleformat{\section}[block]{\large\scshape\centering}{\thesection.}{1em}{} 
\titleformat{\subsection}[block]{\large}{\thesubsection.}{1em}{} 
\title{Approximate  controllability of the semilinear reaction-diffusion equation governed by a multiplicative control}
\author{%
\textsc{M. Ouzahra}\\
\normalsize M2PA Laboratory, University of
Sidi Mohamed Ben Abdellah \\
P.O. Box 5206, Bensouda, F\`es, Morocco\\ 
\normalsize \href{mohamed.ouzahra@usmba.ac.ma}{mohamed.ouzahra@usmba.ac.ma} }
\date{}
\newtheorem{theorem}{Theorem}[section]
\newtheorem{corollary}{Corollary}
\newtheorem{lemma}[theorem]{Lemma}
\newtheorem{definition}[theorem]{Definition}
\newtheorem{remark}{Remark}
\newtheorem{example}{Example}
\newtheorem{proof}{Proof}
\begin{document}

\maketitle

\section{Introduction}
Our  goal in this paper is to study the global approximate controllability properties of the following semilinear Dirichlet boundary value problem
\begin{equation}\label{Eq}
\left\{
\begin{array}{ll}
  y_t(t) =   \Delta y(t) +v(x,t) {\bf 1}_O y(t)+f(t,y(t)),& \mbox{in} \; Q=\Omega\times (0,T_0) \\
  y = 0,  & \mbox{on} \;\Sigma=\partial\Omega\times (0,T_0)\\
  y(0)=  y_0\in L^2(\Omega), & \mbox{in} \;\Omega
\end{array}
\right.
\end{equation}
where  $ T_0>0,\, \Omega$ is a bounded open domain of $\mathbf{R}^d, d\ge 1$ with smooth boundary $\partial\Omega$ and $O$ is an  open subset of $\Omega$  with a characteristic function denoted by $ {\bf 1}_O$. The nonlinear term  $f: {\mathcal H}=[0,T_0] \times L^2(\Omega) \rightarrow L^2(\Omega)$ is  Lipschitz continuous in both variables, i.e.  there is a  constant $L>0$ such that
$$
\|f(t_1,y_1)-f(t_2,y_2)\|\le L(|t_1-t_2|+\|y_1-y_2\|),\;\; \forall (t_i,y_i)\in {\mathcal H}, i=1,2,
$$
where $\|\cdot\|$ refers to the conventional norm of $L^2(\Omega).$ Here, for each time $t\ge0$, the state $y(t)$ is given by the function $y(t)=y(\cdot,t) \in L^2(\Omega) $  and  $v(\cdot)$ is the control function which can be chosen in appropriate spaces. In terms of applications, equation (\ref{Eq}) provides practical description of various real problems such as chemical reactions, nuclear chain reactions, and biomedical models... (see \cite{and,ban88,ferr02,fri,kha10,per,roo,sal,tos} and the references therein).
 Research in  the  controllability of   distributed  systems by additive (linear) controls  have been the subject of several works (see for instance \cite{dou,fab,fer,fer99,fur96,glow,leb95,zua}). The question of  controllability of PDEs equations by multiplicative  (bilinear) controls has attracted many researchers in the context of  various type of  equations,  such as rod equation \cite{bal82,kim95}, Beam equation \cite{bea08}, Schr$\ddot{o}$dinger equation \cite{bea06,kim95,ners},   wave equation \cite{bal82,bea11,kha04,kha06,kha10,ouz14} and heat equation \cite{can10,can15,fer12,kha10,lin06,ouz15,ouz16}.  In \cite{can10}, the    approximate controllability properties have been derived for the one-dimensional  version  of (\ref{Eq}) for $f=0$ and  initial and target states   with finitely many changes of sign. The case where the support of the bilinear  control is allowed to depend  on time  has been discussed by Fern\`andez and Khapalov in \cite{fer12}. The exact controllability  of the  bilinear part of equation  (\ref{Eq}) with inhomogeneous Dirichlet conditions has been considered in \cite{lin06,ouz15}. However, the assumptions of \cite{lin06,ouz15} are not compatible when dealing with homogeneous Dirichlet conditions. In   \cite{kha03}, Khapalov studied the global  approximate controllability of  the semilinear convection-diffusion-reaction equation by bilinear controls while dealing  with nonnegative  initial and target states. In \cite{can15}, Cannarsa, Floridia and Khapalov have studied the global approximate controllability properties of the one dimensional version of (\ref{Eq}) with a time-independent nonlinear term when the initial and target states  admit no more than finitely many changes of sign.  In \cite{ouz16}, the question of multiplicative controllability of the bilinear variant of the system (\ref{Eq}) has been discussed when  the initial and target states $y_0, y^d$ are such that $y_0(x) y^d(x)\ge 0$  for  almost every $x\in \Omega$. We observe that, in the aforementioned papers, the control acts all over the evolution domain. Here,  our results  also includes the case of locally supported control.
 Moreover in the case of a globally distributed control, the multiplicative controllability of the reaction-diffusion semilinear  (\ref{Eq}) has been established in a time which   depends on the initial and target state. However in our case, we show that under a globally distributed control,  the time of  steering   can be taken independent  of the initial and target states.
  Furthermore, the steering control is constructed using an explicit approximation procedure, relying on Bernstein polynomials, combined with some density and approximation arguments.

The paper is organized as follows: In the next section, we present the main results. In the third section, we  provide some preliminary results that will be needed along the paper. The fourth section is devoted to the proofs of the main results.

\section{Main results }

We are interested in studying the approximate controllability of system (\ref{Eq}). More precisely,   we will first provide a locally supported  control  that can steer the system (\ref{Eq})  from its initial state $y_0$ to a final state $y(T) $ which is sufficiently close to the desirable state $y^d$  at a suitable time $ 0<T<T_0$  that  depends on the choice of $(y_0, y^d)$ and the  precision of steering $\epsilon>0.$
Moreover, in the context of a globally distributed  control, we will provide additional conditions that allow us to derive the approximate steering in a  uniform time-interval (i.e. the steering time is independent of the  initial and target states).\\
Everywhere below we will consider only non-zero initial states $y_0\in  L^2(\Omega),$ for which we  consider the set $\Lambda=\{x\in \Omega /\; y_0(x)\ne0\}$.

Our main results are as follows.
\begin{theorem}\label{thma}
Let $f$ be Lipschitz  in both variables, let $y_0\in L^2(\Omega)\setminus\{0\}$ be fixed and let $ y^d\in L^2(\Omega)$ be a desired state such that: (i) $\{x\in \Omega/\; y_0(x)\ne y^d(x)\} \subset O, $ a.e. and $a:=\ln(\frac{y^d}{y_0}){\bf 1}_{\Lambda\cap O} \in L^{\infty}(O), $ and
 (ii) for a.e. $x\in O, \; y_0(x) y^d(x)\ge 0$ and $y_0(x)=0\Longleftrightarrow y^d(x)=0$.\\
  Then for any $\epsilon>0, $ there are a time $ 0<T=T(y_0,y^d,\epsilon) <T_0$ and a static control $ v\in L^{\infty}(\Omega)$ such that for the respective solution to (\ref{Eq}), we have the following estimate:
\begin{equation}\label{estim*}
\|y(T)-y^d\| <\epsilon\cdot
\end{equation}

\end{theorem}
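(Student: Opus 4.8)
The plan is to choose a time-independent control for which the purely multiplicative flow reaches $y^d$ exactly, and then to show that turning on the diffusion and the nonlinearity over a short time interval perturbs this flow by an arbitrarily small amount in $L^2(\Omega)$. First I would fix $T\in(0,T_0)$, to be made small only at the very end, and set
\[
v:=\frac{1}{T}\,a=\frac{1}{T}\ln\Big(\frac{y^d}{y_0}\Big){\bf 1}_{\Lambda\cap O}\in L^\infty(\Omega),
\]
which is supported in $O$ and satisfies $\|v{\bf 1}_O\|_\infty=\|a\|_\infty/T$. Writing $R(t)y_0:=e^{\,t\,v{\bf 1}_O}y_0$ for the associated multiplication semigroup, hypotheses (i)--(ii) give $R(T)y_0=e^{a}y_0=y^d$ almost everywhere: on $\Lambda\cap O$ this is the definition of $a$ together with (ii) (so that $y^d/y_0>0$); on $O\setminus\Lambda$ both $y_0$ and $y^d$ vanish by (ii); and on $\Omega\setminus O$ one has $v=0$ and $y_0=y^d$ by (i). Thus the pure reaction flow hits the target exactly, and the whole problem reduces to estimating the defect created by $\Delta$ and $f$.

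Let $\mathcal T_T(t)$ denote the analytic semigroup generated by $\Delta+v{\bf 1}_O$, so that the mild solution of (\ref{Eq}) reads $y(T)=\mathcal T_T(T)y_0+\int_0^T\mathcal T_T(T-s)f(s,y(s))\,ds$. I would then split
\[
\|y(T)-y^d\|\le\Big\|\int_0^T\mathcal T_T(T-s)f(s,y(s))\,ds\Big\|+\|\mathcal T_T(T)y_0-R(T)y_0\|.
\]
Since $\Delta$ is dissipative and $v{\bf 1}_O$ is a bounded multiplier, the standard bounded-perturbation estimate yields $\|\mathcal T_T(t)\|\le e^{\,t\|a\|_\infty/T}\le e^{\|a\|_\infty}$ for $t\in[0,T]$, uniformly in $T$. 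Combined with the Lipschitz bound $\|f(s,y)\|\le\|f(s,0)\|+L\|y\|$ and a Gronwall a priori estimate, $\sup_{[0,T]}\|y(s)\|$ is bounded by a constant depending only on the data and not on $T$; hence the first term is $O(T)$ and vanishes as $T\to0$.

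The crux is the second term. Because $T\,v{\bf 1}_O=a$ is independent of $T$, it equals $\|e^{\,T\Delta+M_a}y_0-e^{M_a}y_0\|$, the difference at time $1$ of the semigroups generated by $T\Delta+M_a$ and by $M_a$, where $M_a$ denotes multiplication by $a$. As $T\to0$ these generators converge on a core of $M_a$, namely $(T\Delta+M_a)\phi\to M_a\phi$ for $\phi\in D(\Delta)$, while the semigroups stay uniformly bounded, so the Trotter--Kato theorem gives $\mathcal T_T(T)y_0\to e^{M_a}y_0=y^d$ in $L^2(\Omega)$. I expect this to be the main obstacle: the control amplitude blows up like $1/T$, so for merely $L^2$ data the naive Duhamel estimate of $\mathcal T_T(T)y_0-R(T)y_0$ contains a non-integrable singularity and the diffusion contribution is not $O(T)$. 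To make the argument constructive, matching the tools announced in the introduction, I would first establish the estimate for smooth data -- where $T\Delta y_0$ is genuinely small -- by approximating $y_0$ in $L^2(\Omega)$ by smooth functions and uniformly approximating the multiplier $e^{a}$ by Bernstein polynomials, and then pass to the limit using density, the uniform semigroup bounds, and continuous dependence of the mild solution on $y_0$. Finally, given $\epsilon>0$ I would fix $T=T(y_0,y^d,\epsilon)\in(0,T_0)$ small enough that both terms are below $\epsilon/2$, which produces the static control $v\in L^\infty(\Omega)$ and the estimate (\ref{estim*}).
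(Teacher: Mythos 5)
Your proposal is correct, and its decisive step takes a genuinely different route from the paper. Both proofs start the same way: the static control $v=a/T$, the identity $e^{a}y_0=y^d$ (which you verify correctly from (i)--(ii)), and the observation that the nonlinear Duhamel term $\int_0^T\mathcal{T}_T(T-s)f(s,y(s))\,ds$ is $O(T)$, thanks to the uniform bound $\|\mathcal{T}_T(t)\|\le e^{\|a\|_{L^\infty}}$ for $t\in[0,T]$ and a Gronwall estimate. The divergence is in how the linear discrepancy is killed. The paper never isolates $\mathcal{T}_T(T)y_0-e^{a}y_0$; instead it expresses the whole defect through the Duhamel formula relative to the multiplication semigroup, as in (\ref{VCF*}), and must therefore make sense of $Ay(s)$ and bound it uniformly. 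This forces a cascade of regularizations: Bernstein approximation (Lemma \ref{lemma3}) of the forcing $t\mapsto f(t,y(t))$ to replace the mild solution by a classical one, the extra hypotheses $a\in W^{2,\infty}(\Omega)$ and $y_0\in{\mathcal D}(A)$ together with a Leibniz rule to control $\|Ay_n\|$, and then two density steps (resolvent regularization $\lambda R(\lambda;A)y_0$ of the initial state, mollification of $e^{a}$ via Lemma \ref{lemma2}); in exchange it obtains the explicit rate $\|y_n(T)-y^d\|\le M_4T$ with trackable constants. Your route --- rescaling time to write $\mathcal{T}_T(T)=e^{T\Delta+M_a}$ (the semigroup generated by $T\Delta+M_a$ at time $1$) and invoking Trotter--Kato with the uniform stability bound $e^{\|a\|_{L^\infty}}$, the convergence $(T\Delta+M_a)\phi\to M_a\phi$ on ${\mathcal D}(\Delta)$ (which is a core of the bounded operator $M_a$, since any dense subspace is), and the fact that $M_a$ is itself a generator --- settles the general case $a\in L^\infty(\Omega)$, $y_0\in L^2(\Omega)$ in one stroke, with no smoothness hypotheses, no case distinctions, and no Bernstein machinery. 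What you lose is exactly what the paper gains: Trotter--Kato is purely qualitative (no rate, no explicit $T(y_0,y^d,\epsilon)$), whereas the paper's construction is explicit --- which is what its introduction advertises, and its intermediate smooth-multiplier estimate is precisely what the proof of Corollary \ref{cor1} reuses. Two minor remarks: your worry that the Trotter--Kato step is "the main obstacle" is unfounded --- as you set it up, it is complete, and your correct diagnosis that a naive Duhamel comparison fails for $L^2$ data is exactly why the paper regularizes; and your fallback sketch misattributes the Bernstein polynomials: in the paper they approximate $t\mapsto f(t,y(t))$ in time, while $e^{a}$ is smoothed by mollification, not by Bernstein approximation.
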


As a consequence of Theorem \ref{thma}, we have the following result.

\begin{corollary}\label{cor1}
 Let   $O=\Omega$ and let  $f$ be Lipschitz  in both variables. If   $y_0\in L^\infty(\Omega)$  and  $ y^d\in L^2(\Omega)$ are  such that $h=\frac{y^d}{y_0}{\bf 1}_\Lambda\in L^2(\Omega)$ and if  assumption (ii) of Theorem \ref{thma} holds,
  then for any $\epsilon>0, $ there are a time $ 0<T=T(y_0,y^d,\epsilon) <T_0$ and a static control $ v\in L^{\infty}(\Omega)$ such that for the respective solution to (\ref{Eq}), we have the  estimate (\ref{estim*}).

\end{corollary}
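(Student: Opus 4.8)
The plan is to \emph{deduce} Corollary~\ref{cor1} from Theorem~\ref{thma} by an approximation argument on the target state, rather than rerunning the whole construction. Taking $O=\Omega$ makes the inclusion $\{x:\,y_0(x)\ne y^d(x)\}\subset O$ of hypothesis~(i) automatic, so the only hypothesis of Theorem~\ref{thma} that can fail for $y^d$ itself is the demand that $a=\ln(y^d/y_0)\mathbf{1}_{\Lambda}\in L^\infty(\Omega)$. Indeed, the assumption $h=\frac{y^d}{y_0}\mathbf{1}_\Lambda\in L^2(\Omega)$ controls the ratio only in an averaged sense and provides no pointwise lower bound away from $0$, so $\ln h$ need not be bounded. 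My strategy is therefore to replace $y^d$ by a sequence of \emph{admissible} targets $y^d_n$, each satisfying all hypotheses of Theorem~\ref{thma}, with $y^d_n\to y^d$ in $L^2(\Omega)$, and then to conclude via a triangle inequality.

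First I would note that hypothesis~(ii) forces $h>0$ a.e.\ on $\Lambda$ (it has the sign of $y_0$ and does not vanish there) and $y^d=0$ a.e.\ off $\Lambda$. I would then truncate the ratio by setting $g_n:=\min\bigl(\max(h,\frac1n),\,n\bigr)$ and defining $y^d_n:=y_0\,g_n\,\mathbf{1}_{\Lambda}$. By construction $\frac1n\le g_n\le n$ on $\Lambda$, so $a_n:=\ln(y^d_n/y_0)\mathbf{1}_\Lambda=\ln(g_n)\mathbf{1}_\Lambda$ obeys $|a_n|\le\ln n$ and lies in $L^\infty(\Omega)$; moreover $y_0\,y^d_n=g_n\,y_0^2\ge0$ and $y^d_n=0\Leftrightarrow y_0=0$, so hypothesis~(ii) also holds for $y^d_n$. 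Hence each $y^d_n$ is an admissible target, and Theorem~\ref{thma} supplies a time $T_n\in(0,T_0)$ and a static control $v_n\in L^\infty(\Omega)$ steering $y_0$ to within $\epsilon/2$ of $y^d_n$.

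It remains to make $y^d_n$ close to $y^d$. Since $y^d=y_0h$ on $\Lambda$ and both $y^d_n$ and $y^d$ vanish off $\Lambda$, we have $y^d_n-y^d=y_0\,(g_n-h)\mathbf{1}_\Lambda$, whence
\[
\|y^d_n-y^d\|^2=\int_\Lambda y_0^2\,(g_n-h)^2\,dx\le\|y_0\|_{L^\infty}^2\int_\Lambda (g_n-h)^2\,dx .
\]
This is precisely where $y_0\in L^\infty(\Omega)$ is used decisively: it upgrades the $L^2$-hypothesis on $h$ into $L^2$-control of $y^d_n-y^d$. As $g_n\to h$ a.e.\ on $\Lambda$ with $(g_n-h)^2\le h^2+1\in L^1(\Lambda)$ (recall $|\Lambda|<\infty$), dominated convergence gives $\|y^d_n-y^d\|\to0$; choosing $n$ so large that $\|y^d_n-y^d\|<\epsilon/2$ and adding the two estimates finishes the proof. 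I expect the only genuine subtlety to be this last convergence, i.e.\ showing $\int_\Lambda(g_n-h)^2\to0$, which splits into the tail $\{h>n\}$ (handled by $h\in L^2$) and the small-value set $\{h<\frac1n\}$ (which shrinks to the null set $\{h=0\}\cap\Lambda$); the verification of the hypotheses of Theorem~\ref{thma} for $y^d_n$ is otherwise routine.
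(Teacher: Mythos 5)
Your proof is correct, and it shares the paper's overall skeleton: replace the possibly unbounded ratio $h=\frac{y^d}{y_0}{\bf 1}_\Lambda$ by a bounded, strictly positive approximation, steer to the corresponding modified target, and close with a triangle inequality in which the hypothesis $y_0\in L^\infty(\Omega)$ converts $L^2$-convergence of ratios into $L^2$-convergence of targets. The difference lies in the approximation device and in how Theorem \ref{thma} is invoked. The paper mollifies: it appeals to Lemma \ref{lemma2} to produce smooth, strictly positive $h_r\to h$ in $L^2(\Omega)$, sets $a_r=\ln h_r$, and then re-enters the \emph{proof} of Theorem \ref{thma} (the smooth-$a$ case) to get $\|y(T_1)-e^{a_r}y_0\|<\epsilon/2$ for the explicit control $v=a_r/T_1$. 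You instead truncate, $g_n=\min(\max(h,\frac1n),n)$, verify that the pair $(y_0,\,y_0g_n{\bf 1}_\Lambda)$ satisfies hypotheses (i)--(ii), and apply Theorem \ref{thma} as a black box, with the convergence $\|y_0(g_n-h){\bf 1}_\Lambda\|\to 0$ secured by dominated convergence. Your route is more modular and slightly more economical: it needs neither Lemma \ref{lemma2} nor any smoothness of the approximating ratio, and it incidentally sidesteps a small mismatch in the paper, which invokes Lemma \ref{lemma2} for an $h$ that is only assumed to lie in $L^2(\Omega)$ although the lemma is stated for $h\in L^\infty(\Omega)$ (harmless, since only conclusions (ii)--(iii) of that lemma are used and they persist for nonnegative $h\in L^2$, but your argument never raises the issue). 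What the paper's version buys in exchange is explicitness: the steering control is exhibited concretely as a logarithm of a mollification of $h$, consistent with the paper's stated aim of constructing controls through explicit approximation procedures.
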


Theorem \ref{thma}   provides the basis for the following  controllability result within any a priori fixed time-interval.

\begin{corollary}\label{cor2}
Let  $O=\Omega,$ let $T\in (0,T_0)$ be fixed and let us set $Q_T=(0,T)\times \Omega$. We  assume that there is a positive constant $C$ such that for all $y\in L^2(\Omega)$ we have: $  |f(t,y)(x)|\le C |y(x)|,\; $ for all $t\in (0,T)$ and for a.e. $\; x\in \Omega$. Let $y_0\in L^2(\Omega)\setminus\{0\}$  and  $ y^d\in L^2(\Omega)$ satisfying  the condition $a=\ln(\frac{y^d}{y_0}){\bf 1}_{\Lambda} \in L^{\infty}(\Omega)$ and   the assumption (ii) of Theorem \ref{thma}. If in addition  one of the following hold \\

(a) $  y^d\in H^2(\Omega)$ and $   \frac{\Delta y^d}{y^d} {\bf 1}_\Lambda\in L^\infty(\Omega), $

or

(b) $y^d \in L^2(\Omega) $ and $y^d(x)\ge 0,\; $ a.e. $x\in\Omega$, \\

then  for any $\epsilon>0$, there exists $v\in L^\infty(Q_T)$ such that
\begin{equation}\label{estimT0}
\|y(T)-y^d\|<\epsilon\cdot
\end{equation}
\end{corollary}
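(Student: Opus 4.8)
The plan is to split the prescribed interval $[0,T]$ into two consecutive phases and to exploit the fact that the extra hypotheses (a) or (b) turn $y^d$ (or a good approximation of it) into an \emph{equilibrium trajectory} of the controlled equation. Since $O=\Omega$, the control acts everywhere, so after approximately reaching $y^d$ we can simply hold the state there for the remainder of the interval. This is exactly the mechanism that fixes the steering time in advance rather than letting it be dictated by the data: Theorem \ref{thma} is invoked only to reach a neighbourhood of $y^d$ quickly, in some time $\tau<T$, and the leftover interval $[\tau,T]$ is absorbed by the holding phase. Because Theorem \ref{thma} steers within an arbitrarily small time interval, I may assume $\tau<T$.

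For case (a) the holding control is explicit. On $[\tau,T]$ I would take
\[
v(x,t)=-\frac{\Delta y^d(x)+f(t,y^d)(x)}{y^d(x)}\,{\bf 1}_\Lambda(x).
\]
By hypothesis $\frac{\Delta y^d}{y^d}{\bf 1}_\Lambda\in L^\infty(\Omega)$, while the growth bound $|f(t,y^d)(x)|\le C|y^d(x)|$ gives $|f(t,y^d)/y^d|\le C$ on $\Lambda$, so $v\in L^\infty(Q_T)$. A direct computation shows that with this control the constant-in-time function $y^d$ solves the state equation exactly, since the reaction term $\Delta y^d+v\,y^d+f(t,y^d)$ vanishes a.e. (on $\Lambda$ by the choice of $v$, and off $\Lambda$ because there $y^d=0$, $f(t,y^d)=0$, and $\Delta y^d=0$ a.e., using assumption (ii)). Writing $z=y-y^d$ for the solution emanating from $y(\tau)$ and testing the difference of the two equations with $z$, the dissipativity $\langle\Delta z,z\rangle\le0$ and the Lipschitz bound on $f$ yield $\frac{d}{dt}\|z\|^2\le 2(\|v\|_\infty+L)\|z\|^2$. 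Gronwall's lemma then gives $\|y(T)-y^d\|\le\|y(\tau)-y^d\|\,e^{(\|v\|_\infty+L)T}$, so choosing the Phase-1 precision $\delta:=\epsilon\,e^{-(\|v\|_\infty+L)T}$ in Theorem \ref{thma} produces $\|y(T)-y^d\|<\epsilon$.

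For case (b), where only $y^d\ge0$ together with the ratio hypothesis is assumed, I would reduce to case (a) by approximation. Assumption (ii) combined with $y^d\ge0$ forces $y_0>0$ and $y^d>0$ a.e. on $\Lambda$, both vanishing off $\Lambda$. I would then approximate $y^d$ in $L^2(\Omega)$ by targets $y^d_n$ regular enough to satisfy the hypotheses of case (a), namely $y^d_n\in H^2(\Omega)$ with $\frac{\Delta y^d_n}{y^d_n}{\bf 1}_\Lambda\in L^\infty(\Omega)$, while preserving the compatibility with $y_0$: the sign/zero-set condition (ii) and the two-sided bound $\ln(y^d_n/y_0){\bf 1}_\Lambda\in L^\infty(\Omega)$. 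Applying the case-(a) result to $y^d_n$ with precision $\epsilon/2$, together with $\|y^d_n-y^d\|<\epsilon/2$ for $n$ large, then yields the claim.

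The main obstacle is precisely this approximation step. One must regularize a merely $L^2$, nonnegative target into an $H^2$ function whose ratio $\Delta y^d_n/y^d_n$ stays bounded, \emph{without} destroying the two-sided ratio bound $\ln(y^d_n/y_0)\in L^\infty$ that couples the target to the possibly degenerate initial state $y_0$. This is where the smooth-approximation and Bernstein-polynomial tools announced in the introduction should enter: the delicate point is that the approximation must be performed relative to $y_0$, so that $y^d_n/y_0$ remains pinched between two positive constants on $\Lambda$, rather than in a fixed reference space. Note that no uniform control estimate is needed across $n$, since $n$ is fixed first (to make $\|y^d_n-y^d\|<\epsilon/2$) and only afterwards is case (a) applied, with a finite holding-control norm; the total time stays exactly $T$ throughout.
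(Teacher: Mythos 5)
Your two-phase plan (reach $y^d$ fast, then hold it) is exactly the paper's architecture, but you implement \emph{both} phases on the full semilinear system, and that costs you the Lipschitz continuity of $f$ --- which Corollary \ref{cor2} does not grant. The corollary assumes only the pointwise growth bound $|f(t,y)(x)|\le C|y(x)|$; as the paper's example and remark stress, removing the Lipschitz hypothesis is the very point of this result. You use Lipschitzness twice: Theorem \ref{thma} (whose hypotheses include $f$ Lipschitz in both variables) is invoked for the semilinear system in Phase 1, and your Phase-2 Gronwall inequality $\frac{d}{dt}\|z\|^2\le 2(\|v\|_{\infty}+L)\|z\|^2$ needs $\|f(t,y)-f(t,y^d)\|\le L\|y-y^d\|$. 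Under the growth bound alone, the difference $f(t,y)-f(t,y^d)$ is only $O(\|y\|+\|y^d\\|)$, so making $y^d$ an equilibrium does not force nearby trajectories to stay nearby, and Phase 1 cannot be launched at all. The paper's fix is structural: both phases are run on the \emph{bilinear} system $\varphi_t=\Delta\varphi+q\varphi$ (Theorem \ref{thma} applied with $f=0$ gives the static control $q_1$ on $[0,T_1)$; then $g=-\frac{\Delta y^d}{y^d}{\bf 1}_\Lambda$ holds $y^d$ on $[T_1,T]$), and the nonlinearity is then removed \emph{through the control} rather than estimated: with $v=q-\frac{f(t,\varphi)}{\varphi}{\bf 1}_E$, $E=\{(x,t):\varphi(x,t)\ne0\}$ --- which lies in $L^\infty(Q_T)$ precisely because of the growth hypothesis --- the bilinear trajectory $\varphi$ is itself a solution of the semilinear system (off $E$ one has $\varphi=0$, hence $f(t,\varphi)=0$ by the growth bound), so $y\equiv\varphi$ by uniqueness and $y(T)=\varphi(T)$ is near $y^d$. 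In short: you cancel $f$ at the \emph{target} and then need a stability estimate you do not have; the paper cancels $f$ along the \emph{trajectory} and needs no estimate at all. That cancellation device is the missing idea.

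Your case (b) is, by your own admission, not a proof: you name the regularization obstacle and stop. The paper's route is much simpler than the ``$y_0$-relative'' approximation you anticipate (and Bernstein polynomials play no role here; they live inside the proof of Theorem \ref{thma}): pick $y_\epsilon\in C^{\infty}(\mathbf{R}^d)$ with $y_\epsilon\ge\alpha>0$ on $\overline{\Omega}$ and $\|y^d-y_\epsilon\|<\epsilon/2$ (smooth approximation plus a small positive shift, in the spirit of Lemma \ref{lemma2}); then $\Delta y_\epsilon/y_\epsilon\in L^{\infty}(\Omega)$ is automatic because the denominator is bounded below, case (a) is applied with target $y_\epsilon$, and the triangle inequality concludes. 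You are right that this swap of target tacitly requires the standing hypotheses --- boundedness of $\ln(y_\epsilon/y_0){\bf 1}_\Lambda$ and condition (ii), the latter failing off $\Lambda$ where $y_0=0$ but $y_\epsilon>0$ --- to hold for the new pair $(y_0,y_\epsilon)$, and the paper does not re-verify them; your instinct about where the delicacy sits is sound. But diagnosing that difficulty is not resolving it, so under either reading your case (b) remains a gap.
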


\begin{remark}
$\bullet$   In \cite{kha03},  approximate controllability results have been established for initial and target states which  are not allowed to vanish in $\Omega$. Moreover, the one dimension version of equation (\ref{Eq}) has been studied in \cite{can10,can15}  with a nonlinearity which is independent of time  and also the points of "change of sign" of $y_0$ and $y^d$ are supposed finite. \\
   Note that, in one-dimensional case,    if the nonlinearity $f$ is time-independent and if  $y_0$ and $y^d$ have  opposite signs in a sub-interval of $\Omega, $ then our results are not applicable while those of $\cite{can10,can15}$ are applicable (provided the number of change of sign is finite and respect some order related to the maximum principle).

$\bullet$  As in Corollary \ref{cor1}, the result of Corollary  \ref{cor2} remains true if  the assumption
$\ln(\frac{y^d}{y_0}){\bf 1}_{\Lambda} \in L^{\infty}(\Omega)$ in Corollary  \ref{cor2} is replaced by $y_0\in L^\infty(\Omega)$  and $h=\frac{y^d}{y_0}{\bf 1}_\Lambda\in L^2(\Omega)$.

\end{remark}

\begin{example}
Let us consider the system (\ref{Eq}) with $d=2$,  $f(t,y)(x)=\tilde{f}(y(x)), \, $ a.e. $x\in \Omega:=(0,1)^2,$ where $\tilde{f}$ is Lipschitz from $\mathbf{R}$ to $\mathbf{R}$. Let $ O\subset \Omega,\; y_0=(x_1-x_2) {\bf 1}_O,$ a.e. $ x=(x_1,x_2) \in \Omega,$ let $y^d= k(x) y_0  {\bf 1}_O$ with  $k\in L^\infty(\Omega)$ and $ k(x)>0, $ a.e.  $ x\in O.$ We can observe that  $y_0$ and $y^d$ have the same sign a.e. in $O$. More precisely, $y_0$ and $y^d$ vanish on $\Gamma$ and  are positive in $\Gamma^+:=\{ (x_1,x_2)\in O/\, x_1>x_2\}$ and negative in $\Gamma^-:=\{ (x_1,x_2)\in O/\, x_1<x_2\}$. According to Theorem \ref{thma}, the initial state $y_0$ can be approximately steered to $y^d$ at a small time $T_1$ which depends on $y_0$ and $ y^d.$ Moreover, if the control is allowed to act  all over the evolution domain, i.e. $O=\Omega$  and if $y_0\ge 0,$ a.e., in $\Omega$, then by  Corollary \ref{cor2}, the steering time can be a  priori chosen without assuming that $\tilde{f}$ is Lipschitz, but only $|\tilde{f}(s)|\le C |s|,\; s \in \mathbf{R}\;  ($ for some $C>0).$ Here again, the  result of Corollary \ref{cor2} improves those in  the literature   requiring that the nonlinearity $f$ is Lipschitz and that the time of steering should depend on $y_0$ and $y^d$.
\end{example}
{\it Outline and main ideas for the proofs.}\\
The proofs of the main results in  Section \ref{proofM}  consist on establishing the estimate (\ref{estim*}) in several steps by distinguishing various cases on smoothness of the initial state and the considered static control. The main idea for the proof of Theorem \ref{thma} consists on looking for a time $T=T(y_0,y^d,\epsilon)$ depending on $(y_0, y^d)$ and the  precision of steering $\epsilon>0$, and a static control $v(x,t)=v_T(x)\in L^\infty(\Omega)$ depending on $T>0$  such that $e^{Tv_T}y_0=y^d $ a.e. in $\Omega$, and  showing that the respective solution to (\ref{Eq})  is such that $y(T)-y^d\to 0, $ as $T\to 0^+.$  This goal   will be  achieved by selecting a  static control $v(x,t) = v_T (x) $ that enables us to write
\begin{equation}\label{VCF*}
  y(T)-y^d=\int_0^Te^{\frac{T-s}{T}a(x)} \big ( A y(s) + f(s,y(s)) \big ) ds,
  \end{equation}
and showing that the right-hand side of this relation tends to $0$ as $T\to 0^+.$\\
The proof of Theorem \ref{thma} amounts to estimate the right-hand side in the above formula in order to prove that it can be made arbitrarily small as long as the static control $v_T(x)$ and  the steering time $T$ are well-chosen. At that point, smoothness assumptions are required on  the expected control and the respective  solution. Then, based on the variation of constants formula and linear semigroup theory, we can conclude by density and approximation arguments.\\
Finally, if the steering control is allowed to act in the whole domain (i.e. $O=\Omega$) we  show in Corollary \ref{cor2} that, given a priori prescribed time $T'>0,$ one can provide a second (time-dependent) control on $[T,T']$ (where $T=T(y_0,y^d,\epsilon)$ is given in Theorem \ref{thma}) that maintains the system at hand closer to a system  admitting the target state as an equilibrium, so that the  system's state remains close to the target state in the whole time-interval.

Note that the idea of exploiting the relation (\ref{VCF*}) to establish the approximate steering was first introduced  by Khapalov in \cite{kha03}  for initial and target state that have the same signs, and was exploited later in   \cite{can10,can15} for $d=1$ to study the case of initial and target states that change their sign in a finitely number of points. However, our methods differ from those of \cite{can10,can15,kha03} in the way to show that the right-hand side of (\ref{VCF*}) goes to $0$ as $T\to 0^+, $ when dealing with a locally supported control.

\begin{remark}

$\bullet $ It  \cite{kha03}, Khapalov has showed that the system (\ref{Eq}) can be steered  from any nonzero initial state $y_0$ into any desirable neighborhood of any nonzero target state   $y^d$ that has the same sign as $y_0$ at a time $T=T(y_0,y^d,\epsilon) > 0,$ which depends on the choice of $(y_0, y_d)$ and the desirable precision of steering $\epsilon$. Then, thanks to the spectral expansion of the semigroup associated to the pure diffusion part of (\ref{Eq}), the  strategy to achieve the desirable controllability result is  to select the multiplicative control in such a way that the corresponding trajectories of (\ref{Eq})  can be approximated by those associated with the pure diffusion (corresponding to $v = 0$ and $f = 0$) and the pure reaction (corresponding to $ A=0$ and $f=0$), while suppressing the effect of the nonlinearity which is considered as a disturbing  term.

$\bullet $ In \cite{can10},  Cannarsa and Khapalov established an approximate controllability property for the one dimensional  bilinear  equation (i.e. system (\ref{Eq}) with $d=1$ and $f=0$) when both the initial and target states are allowed to change their sign in a finite number of points respecting some roles that concorde with the maximum principle. Moreover, an implicit "continuation argument" was employed to justify the fact that one can always continue to move the points of sign change until their appropriate positions have been achieved. Moreover, for the same class of initial and target states as  in \cite{can10}, the authors  in \cite{can15} have extended  the approximate controllability results of \cite{can10} to semilinear equation like (\ref{Eq}) (with a tim-independent nonlinearity $f$).


\end{remark}

\section{Preliminaries}

\subsection{Preliminary results  on linear semigroups and  evolution equations}
  Let us remind the reader that a one parameter family $S(t), \; t \ge 0,$ of bounded linear operators from a Banach space $X$ into $X$ is a semigroup  on $X$ if (i) $S(0) = I,  $  (the identity operator on $X)$ and (ii) $S(t + s) = S(t)S(s) $ for every $t, s\ge 0$. A semigroup $S(t)$ of bounded linear operators on $X$ is a $C_0-$ semigroup if in addition $\lim_{t\to 0^+} S(t)x = x$ for every $x\in  X. $ This property guarantees the continuity of the semigroup on $\mathbf{R}^+$. Moreover, one can show (see \cite{paz}, p. 4 ) that for every  $C_0-$ semigroup $S(t)$, there exist constants $\omega \ge 0$ and $M\ge 1$  such that
\begin{equation}\label{s-g}
 \|S(t)\| \le M e ^{\omega t},\; \forall t\ge 0\cdot
  \end{equation}
   If $\omega = 0$ and $M = 1,\, S(t)$ is called  a $C_0-$semigroup of contractions.\\
The linear operator $A$ defined by  $Ax = \lim_{t\to 0^+} \frac{S(t)x - x}{t} $ for $x\in X$ such that $\lim_{t\to 0^+} \frac{S(t)x - x}{t} $ exists in $X,$ is the infinitesimal generator of the $C_0-$semigroup $S(t). $ The   linear space  $ {\cal D} (A) := \{x\in X:\;  \lim_{t\to 0^+} \frac{S(t)x - x}{t} \in X \}$ is the domain of $A$.

$\bullet$ An infinitesimal generator of a $C_0-$semigroup of contractions is  dissipative, i.e., for every $y\in {\cal D}(A)$ there is  $y^*\in J(y)$ such that ${\cal R}e \langle Ay,y^*\rangle \le 0,$ where $J$ is the duality map from $X$ to  $X^{*}$, which, to each $y\in X,$ corresponds the  set  $J(y)$ of all $\phi\in X^{*}$ such that $\left< y,\phi\right >=\Vert y\Vert^{2}=\Vert \phi \Vert^{2}$, and where the dual $X^{*}$ of $X$ is the set of all bounded linear functionals  on $X$ and  $\left< y,\phi\right >$ is the duality pairing between $y\in X$ and $\phi\in X^{*}$. Conversely, if  $A$ is a densely defined closed linear operator such that both $A$ and its adjoint operator $A^*$ are dissipative, then $A$ is the infinitesimal generator of a $C_0-$semigroup of
contractions on $X$ (see \cite{paz}, pp. 14-15).

$\bullet$ The resolvent set $\rho(A)$ of an unbounded linear operator $A$ in a Banach space $X$ is the set of all complex numbers $\lambda$ for which;  $\lambda I - A$ is invertible, i.e. $(\lambda I - A)^{-1} $ is a bounded linear operator in $X.$ The
family $R(\lambda; A) := (\lambda I - A)^{-1},\;  \lambda \in \rho(A)$  is
called the resolvent of $A.$ The operator $R(\lambda;A)$  commutes with $A$ and $S(t),$ and  for all $y\in X,$ we have $ \lambda R(\lambda;A)y\to y,$ as $\lambda \to +\infty$. We also have that $ A R(\lambda;A) \in {\mathcal L}(X)$ and for all $ y\in {\mathcal D}(A);\; \lambda A R(\lambda;A)y \to Ay,$ as $\lambda \to +\infty$ (see  \cite{paz}, pp. 9-10).

$\bullet$ We have the following properties regarding   $C_0-$semigroups  (\cite{paz}, pp. 4-5)

(1) For every $x\in X, \, t\ge 0; \; \displaystyle \lim_{h\to 0} \frac{1}{h}\int_t^{t+h}   S(s)x ds = S(t)x.$

(2) For every $x\in X, \, t\ge 0; \; \displaystyle \int_0^t S(s)x ds \in {\cal D}(A) $ and
$ A ( \displaystyle\int_0^t S(s)x ds )  = T (t)x - x.$

(3) For every  $x\in {\cal D}(A)$ and $0\le s\le t;\;  S(t)x - S(s)x = \displaystyle \int_s^t S(\tau)Ax \:  d\tau  = \displaystyle \int_s^t AS(\tau)x \: d\tau.$

$\bullet$ From the above properties, one can  deduce that if $A$ is the infinitesimal generator of a $C_0-$semigroup $S(t),$ then
${\cal D}(A)$  (the domain of $A$) is dense in $X$ and $A$ is a closed linear operator.
 Moreover, according to Hille-Yosida's Theorem (see for instance \cite{paz}, p. 20), a linear operator $A$ is the infinitesimal generator of a $C_0-$semigroup $S(t)$ satisfying (\ref{s-g}) if and only if
(i) $A$ is closed and ${\cal D}(A)$ is dense in $X$, and (ii) the resolvent set $\rho(A)$ of $A$ contains the ray $(\omega,+\infty)$ and
$\|R(\lambda; A)^n\| \le \frac{ M}{(\lambda - \omega)^n} $ for $\lambda > \omega, \, n = 1, 2, ...$ In particular,   a closed operator $A$ with densely domain ${\cal D}(A)$ in $X$ is the infinitesimal  generator of a $C_0-$semigroup of contractions on $X$ if and only if  the resolvent set $\rho(A)$ of $A$ contains $ \mathbf{R}^+ $ and for all $\lambda>0; \; \|R(\lambda;A)\| \le \frac{1}{\lambda}$ (see \cite{paz}, p. 8).

$\bullet$ For $x \in {\cal D}(A);\; Ax= \frac{d^+S(t)x}{dt}|_{t=0} $ and that   $ y(t):= S(t)y_0   $ is differentiable and lies in ${\cal D}(A)$ for all $t>0, $ and is the unique solution of the Cauchy problem: $\dot{y}(t)=Ay(t), t>0, \; y(0)=y_0$. Moreover, for  every $y_0\in X;\; y(t)= S(t)y_0 $ is called mild solution of this Cauchy problem.

We now consider the nonhomogeneous  initial value problem
\begin{equation}\label{S1}
\left\{
\begin{array}{lll}
y_t \left( t\right) =Ay\left( t\right) +f\left( t,y\left( t\right)
\right) , & t\in \left[ 0,T\right] &  \\
\\
y\left( 0\right) =y_{0} &  &
\end{array}%
\right.
\end{equation}%
where $T>0,\; A$ is the infinitesimal generator of a $C_0-$semigroup $S(t)$ on $X$  and $f: [0,T]\times X\rightarrow X$ is a possibly non linear function.

Let us recall the notion  of weak solution from \cite{ball}.

\begin{definition}
 A function $y\in C\left( \left[ 0,T\right] ;X\right) $ is a  weak solution of (\ref{S1})  if for every $\varphi\in
{\cal D}\left(A^*\right) $ (the domain of the adjoint operator $A^*$ of $A$), the function $ t \mapsto \left\langle y\left( t\right) ,\varphi\right\rangle $ is absolutely continuous on $\left[0,T\right] $ and
$$
\frac{d}{dt}\left\langle y\left( t\right) ,\varphi\right\rangle =\left\langle
y\left( t\right) ,A^*\varphi\right\rangle +\left\langle f\left( t,y\left(
t\right) \right) ,\varphi\right\rangle,\ \ \mbox{for a.e. }\ \ t\in [0,T],
$$
 where $\left<\cdot,\cdot\right >$ is the duality pairing between the Banach space $X$ and its dual $X^*$.
 \end{definition}
A function $y\in C\left( \left[ 0,T\right] ;X\right) $ is a
weak solution of (\ref{S1}) on $\left[ 0,T\right] $ if and only if     $ f\left( .,y\left( .\right) \right) \in L^{1}(0,T;X)$  and  $y$ satisfies  the variation of constants formula (see \cite{ball}):
$$
y(t)=S(t)y_{0}+\int_{0}^{t}S(t-s)f(s,y\left( s\right) )ds,\; \forall t\in [0,T]\cdot
$$
Functions $y$ satisfying the above  formula are called "mild solutions" of the system (\ref{S1}). Moreover, the function  $y$ is a classical solution of (\ref{S1}) if $ y(t) \in {\cal  D}(A),$ for $t\in (0,T), \;y$ is continuous on $[0, T],\;  $  continuously differentiable on  $(0,T)$ and satisfies (\ref{S1}) (see  \cite{paz}, p. 126 $\&$ pp. 183-184). The  mild solution $y$ is  a strong solution  of (\ref{S1})   if it is  differentiable almost everywhere on $[0, T], \; y_t\in L^1(0,T; X) $ and satisfies (\ref{S1}) a.e.  on $ [0, T]$ (see \cite{paz}, p. 109).

The next  result  discusses the well-posedness  for the problem (\ref{S1}) in the case of  Lipschitz continuous functions $f.$

\begin{theorem} (\cite{paz}, p. 184).
  Let $ f: [0, T]\times X  \rightarrow X$ be continuous in $t$ on $[0,T]$ and
uniformly Lipschitz continuous  on $X.$ Then for every $y_0\in X$  the
system (\ref{S1}) has a unique mild solution $y\in {\cal C}([0,T]: X).$
Furthermore, the mapping $y_0 \mapsto y$ is Lipschitz continuous from $X$ into  ${\cal C}([0,T]: X)$.
\end{theorem}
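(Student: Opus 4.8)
The plan is to realize the mild solution as the unique fixed point of the integral operator coming from the variation of constants formula, and to apply the Banach fixed point theorem on the space $C([0,T];X)$. Concretely, set $X_T := C([0,T];X)$ equipped with the supremum norm $\|u\|_{X_T}=\sup_{t\in[0,T]}\|u(t)\|$, and define $\mathcal{F}:X_T\to X_T$ by
\begin{equation}
(\mathcal{F}u)(t) = S(t)y_0 + \int_0^t S(t-s)\, f(s,u(s))\, ds, \qquad t\in[0,T].
\end{equation}
By the characterization of mild solutions recalled just above, a function $y\in X_T$ solves (\ref{S1}) in the mild sense precisely when $\mathcal{F}y=y$. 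First I would check that $\mathcal{F}$ is well defined, i.e. that $\mathcal{F}u\in X_T$ whenever $u\in X_T$: since $f$ is continuous in $t$ and uniformly Lipschitz in the second variable, the map $s\mapsto f(s,u(s))$ is continuous, hence bounded on $[0,T]$, so the integrand $S(t-s)f(s,u(s))$ is integrable by the growth bound (\ref{s-g}); continuity in $t$ of both terms then follows from the strong continuity of $S(\cdot)$ together with the dominated convergence theorem.

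The key step is to turn $\mathcal{F}$ into a contraction. Writing $L$ for the Lipschitz constant of $f$ in its second argument and setting $K:=Me^{\omega T}L$ via (\ref{s-g}), for $u,w\in X_T$ one has
\begin{equation}
\| (\mathcal{F}u)(t) - (\mathcal{F}w)(t)\| \le Me^{\omega T} L \int_0^t \|u(s)-w(s)\|\,ds \le Kt\,\|u-w\|_{X_T}.
\end{equation}
Iterating this estimate and arguing by induction on $n$ yields
\begin{equation}
\| (\mathcal{F}^n u)(t) - (\mathcal{F}^n w)(t)\| \le \frac{(Kt)^n}{n!}\,\|u-w\|_{X_T}, \qquad n\ge 1,
\end{equation}
so that $\|\mathcal{F}^n u - \mathcal{F}^n w\|_{X_T}\le \frac{(KT)^n}{n!}\|u-w\|_{X_T}$. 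Since $(KT)^n/n!\to 0$, some power $\mathcal{F}^n$ is a strict contraction, and the generalized contraction principle then provides a unique fixed point $y\in X_T$, which is the desired unique mild solution on the whole interval $[0,T]$. Equivalently, one could replace the sup norm by a Bielecki-type weighted norm $\|u\|_\lambda=\sup_{t\in[0,T]}e^{-\lambda t}\|u(t)\|$ and choose $\lambda$ large enough to make $\mathcal{F}$ itself a contraction directly; this avoids the iteration but yields the same conclusion.

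Finally, for the Lipschitz dependence on the initial data, let $y$ and $z$ be the mild solutions associated with $y_0$ and $z_0$. Subtracting the two variation of constants formulas and using (\ref{s-g}) gives
\begin{equation}
\|y(t)-z(t)\| \le Me^{\omega T}\|y_0-z_0\| + Me^{\omega T} L \int_0^t \|y(s)-z(s)\|\,ds.
\end{equation}
Gronwall's lemma then produces $\|y(t)-z(t)\|\le Me^{\omega T}e^{Kt}\|y_0-z_0\|$, and taking the supremum over $t\in[0,T]$ shows that $y_0\mapsto y$ is Lipschitz from $X$ into $X_T$ with constant $Me^{\omega T}e^{KT}$. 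I expect the only genuinely delicate point to be the verification that $\mathcal{F}$ maps $X_T$ into itself, namely the continuity in $t$ of the convolution term, which rests on the strong continuity of the semigroup; the contraction estimate and the Gronwall step are otherwise entirely routine.
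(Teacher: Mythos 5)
Your proof is correct: the paper gives no proof of its own for this statement (it is quoted verbatim from Pazy, p.~184), and your argument --- the Picard/contraction map on $C([0,T];X)$, the iterated estimate $\|\mathcal{F}^n u-\mathcal{F}^n w\|\le \frac{(KT)^n}{n!}\|u-w\|$ to get a contracting power, and Gronwall for the Lipschitz dependence on $y_0$ --- is precisely the classical proof found there. Nothing is missing; the well-definedness check and the continuity of $s\mapsto f(s,u(s))$ are handled correctly.
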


A sufficient condition for the mild solution of (\ref{S1}) to be a classical solution is given next.

\begin{theorem}  (\cite{paz}, p. 187).
 Let $ f: [0, T]\times X  \rightarrow X$ be continuously differentiable on $[0, T]\times X.$
 Then the mild solution of (\ref{S1}) with $ y_0\in {\cal D}(A)$ is a classical
solution of  (\ref{S1}).
\end{theorem}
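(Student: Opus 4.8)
The plan is a bootstrap argument: manufacture a natural candidate for $y'$ as the solution of a \emph{linear} integral equation, verify that the difference quotients of the mild solution converge to it, and then upgrade the resulting $C^1$ solution to a classical one by invoking the regularity theory for the linear inhomogeneous Cauchy problem. First I would note that a continuously differentiable $f$ is locally Lipschitz, so the preceding theorem (in its local form) supplies a unique mild solution $y\in C([0,T];X)$; being continuous, $y$ has compact graph $K=\{(s,y(s)):0\le s\le T\}$, on which the partial derivatives $\partial_t f$ and $D_y f$ are uniformly continuous and bounded, and the first-order Taylor remainder of $f$ is uniformly small.

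Writing $b(s)=\partial_t f(s,y(s))\in X$ and $B(s)=D_y f(s,y(s))\in{\mathcal L}(X)$ --- both continuous in $s$ --- I would define $w$ as the solution of the linear Volterra equation
\[
w(t)=S(t)\big(Ay_0+f(0,y_0)\big)+\int_0^t S(t-s)\big(b(s)+B(s)w(s)\big)\,ds,
\]
where $Ay_0$ is well defined precisely because $y_0\in{\cal D}(A)$. Since the kernel $B(\cdot)$ is continuous, hence bounded, and $S(t-s)$ is uniformly bounded, the right-hand side defines a contraction in $w$ on $C([0,\tau];X)$ for small $\tau$; patching the local solutions through a Gronwall estimate produces a unique $w\in C([0,T];X)$.

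The decisive step, and the main obstacle, is to show that $y$ is differentiable with $y'=w$. I would form $y_h(t)=\frac{1}{h}\big(y(t+h)-y(t)\big)$ and, after the substitution $\sigma=s-h$ in the Duhamel integral, split $y_h(t)-w(t)$ into three contributions: the semigroup part $S(t)\big(\frac{1}{h}(S(h)-I)y_0-Ay_0\big)$, which tends to $0$ exactly because $y_0\in{\cal D}(A)$; the boundary average $\frac{1}{h}\int_{-h}^0 S(t-\sigma)f(\sigma+h,y(\sigma+h))\,d\sigma$, which converges to $S(t)f(0,y_0)$ by continuity; and the bulk term $\int_0^t S(t-\sigma)\Big(\frac{1}{h}\big[f(\sigma+h,y(\sigma+h))-f(\sigma,y(\sigma))\big]-b(\sigma)-B(\sigma)w(\sigma)\Big)\,d\sigma$. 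In the bulk term I would expand the finite difference of $f$ by the mean value theorem as $b(\sigma)+B(\sigma)\,y_h(\sigma)+r_h(\sigma)$, where uniform continuity of $\partial_t f$ and $D_y f$ on the compact set $K$ forces $\sup_\sigma\|r_h(\sigma)\|\to0$ as $h\to0$. This leads to an estimate $\|y_h(t)-w(t)\|\le\varepsilon(h)+M\int_0^t\|y_h(\sigma)-w(\sigma)\|\,d\sigma$ with $\varepsilon(h)\to0$, and Gronwall's inequality yields $\sup_{t\in[0,T]}\|y_h(t)-w(t)\|\to0$. Hence $y\in C^1([0,T];X)$ with $y'=w$; the genuine difficulty is controlling the remainder $r_h$ uniformly and absorbing the $B(\sigma)\,y_h(\sigma)$ term through Gronwall rather than by a naive bound.

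Finally I would close the loop. Now that $y\in C^1$ and $f\in C^1$, the forcing $g(t):=f(t,y(t))$ is continuously differentiable, and $y$ is the mild solution of the \emph{linear} problem $y'=Ay+g$, $y(0)=y_0\in{\cal D}(A)$. Invoking the standard regularity result for the linear inhomogeneous problem (\cite{paz}, Ch.~4), a $C^1$ inhomogeneity together with $y_0\in{\cal D}(A)$ forces $y(t)\in{\cal D}(A)$ for $t\in(0,T)$, continuous differentiability on $(0,T)$, and $y'(t)=Ay(t)+g(t)$, i.e. $y$ is a classical solution of (\ref{S1}).
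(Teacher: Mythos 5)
The paper never proves this statement at all: it is quoted as a background result from Pazy (Theorem 6.1.5, p.~187), so there is no internal proof to compare against; your argument is correct and is, in structure, exactly the proof in the cited source --- the linear Volterra equation defining the candidate derivative $w$, convergence of the difference quotients $y_h$ to $w$ via Gronwall, and the final upgrade to a classical solution through the regularity theory for the linear inhomogeneous problem. The one step you state too quickly is the claim that $\sup_\sigma\|r_h(\sigma)\|\to 0$ by uniform continuity alone: the Taylor remainder is only bounded by $\epsilon_1(h)+\epsilon_2(h)\|y_h(\sigma)\|$ with $\epsilon_i(h)\to 0$, so you must either first prove that $y$ is Lipschitz in $t$ (which follows from $y_0\in{\cal D}(A)$ by a separate Gronwall estimate) or split $\|y_h(\sigma)\|\le\|y_h(\sigma)-w(\sigma)\|+\|w(\sigma)\|$ and absorb the $\epsilon_2(h)\|y_h(\sigma)-w(\sigma)\|$ piece into the Gronwall integrand --- the same absorption device you already invoke for the $B(\sigma)y_h(\sigma)$ term. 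Also, the uniform continuity of $\partial_t f$ and $D_yf$ should be taken on the compact convex set $[0,T]\times\overline{\mathrm{co}}\big(y([0,T])\big)$ rather than on the graph $K$ itself, since the mean-value segments leave $K$; with these two repairs your proof is complete.
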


If $f$ is only  Lipschitz continuous, then the mild solution of (\ref{S1}) is not in general a classical  one. However, in the context of a reflexive space $X$, this may suffice to assure that the mild solution $y$ with initial state $y_0\in  {\cal D}(A)$ is a strong
solution. We have:
\begin{theorem} (\cite{paz}, p. 189). Assume that $X$ is a reflexive Banach space and that $ f: [0, T]\times X  \rightarrow X$ is Lipschitz  continuous in both variables. Then the  mild solution  $y$  of the initial value problem (\ref{S1}) with  $y_0\in {\cal D}(A)$ is  a strong solution of (\ref{S1}).
\end{theorem}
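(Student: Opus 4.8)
The plan is to follow the classical three-stage route: first show that the mild solution is Lipschitz in time, then transfer this regularity to the nonlinear source, and finally use reflexivity to differentiate. Set $M'=\sup_{0\le t\le T}\|S(t)\|$, which is finite by (\ref{s-g}) on the compact interval $[0,T]$, and write the mild solution through the variation of constants formula. For $h>0$ with $t+h\le T$ I would shift the variable of integration in $\int_0^{t+h}S(t+h-s)f(s,y(s))\,ds$ and compare with $y(t)$ to obtain
$$
y(t+h)-y(t)=\bigl[S(h)-I\bigr]S(t)y_0+\int_0^h S(t+h-s)f(s,y(s))\,ds+\int_0^t S(t-s)\bigl[f(s+h,y(s+h))-f(s,y(s))\bigr]\,ds.
$$
The first term is $O(h)$ precisely because $y_0\in{\cal D}(A)$: by property (3) one has $[S(h)-I]S(t)y_0=S(t)\int_0^h S(\tau)Ay_0\,d\tau$, whose norm is at most $M'^2h\|Ay_0\|$. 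The second term is $O(h)$ since $s\mapsto f(s,y(s))$ is bounded on $[0,T]$ by continuity of $y$ and $f$. For the third, the Lipschitz hypothesis gives $\|f(s+h,y(s+h))-f(s,y(s))\|\le L(h+\|y(s+h)-y(s)\|)$. Hence, with $\phi(t)=\|y(t+h)-y(t)\|$, these bounds yield $\phi(t)\le Ch+M'L\int_0^t\phi(s)\,ds$ for a constant $C$ independent of $h$, and Gronwall's inequality gives $\phi(t)\le C'h$; thus $y$ is Lipschitz on $[0,T]$.

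Because $f$ is Lipschitz in both variables and $y$ is now Lipschitz, the source $g(t):=f(t,y(t))$ satisfies $\|g(t_1)-g(t_2)\|\le L(1+C')|t_1-t_2|$ and is therefore Lipschitz on $[0,T]$. At this stage reflexivity enters decisively: a reflexive Banach space has the Radon--Nikodym property, so every Lipschitz $X$-valued function is differentiable a.e.\ and equals the indefinite integral of its derivative. Applying this to $g$ yields $g\in W^{1,1}(0,T;X)$, that is $g(t)=g(0)+\int_0^t g'(\tau)\,d\tau$ with $g'\in L^\infty(0,T;X)\subset L^1(0,T;X)$.

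With $y_0\in{\cal D}(A)$ and this regularity of $g$, I would conclude by showing that the variation of constants representation already defines a strong solution. Substituting $g(s)=g(0)+\int_0^s g'(\tau)\,d\tau$ into $\int_0^t S(t-s)g(s)\,ds$ and using Fubini together with the semigroup identities in properties (1)--(3), one verifies that $\int_0^t S(t-s)g(s)\,ds\in{\cal D}(A)$ for a.e.\ $t$, that $y$ is differentiable a.e., that $y_t\in L^1(0,T;X)$, and that $y_t=Ay+g=Ay+f(t,y)$ a.e.\ on $[0,T]$; i.e.\ $y$ is a strong solution. The decisive, non-routine ingredient is this appeal to reflexivity: in a general Banach space a Lipschitz $X$-valued curve may be nowhere differentiable, so the a.e.\ differentiability of $g$ — and ultimately of $y$ — rests entirely on the Radon--Nikodym property. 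The remaining work is the standard Gronwall estimate of the first stage, whose only delicate point is using $y_0\in{\cal D}(A)$ to render $[S(h)-I]S(t)y_0$ of order $h$.
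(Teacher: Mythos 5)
Your proof is correct and takes essentially the same route as the source the paper cites for this statement (the paper gives no proof of its own, only the reference to Pazy, p.~189): your three stages — the Gronwall argument showing $y$ is Lipschitz on $[0,T]$ (where $y_0\in{\cal D}(A)$ makes $[S(h)-I]S(t)y_0$ of order $h$), the transfer of that regularity to $g(t)=f(t,y(t))$, and the appeal to reflexivity (Radon--Nikodym property) to get a.e.\ differentiability of $g$ and hence a strong solution — are exactly the classical Pazy argument (Thm.~2.9 and Cor.~2.11 of his Chapter~4). No gaps worth flagging; the final verification you sketch via Fubini and the semigroup identities is the standard completion.
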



\subsection{Technical lemmas}

Let us give the following lemma which concerns the uniform approximation of continuous functions using Bernstein polynomials \cite{dav,dit,lor}.
\begin{lemma}\label{lemma3}
Let $u : [0,1] \to X$ be  a continuous  function from $[0,1]$ to a Banach space $(X,\|\cdot\|_X)$, and let $B_n(u)$  be  the $n$th Bernstein polynomial for $u$:
$$
B_n(u)(t)=\sum_{k=0}^n \left(
                                                                 \begin{array}{c}
                                                                   n \\
                                                                   k \\
                                                                 \end{array}
                                                               \right)  t^k(1-t)^{n-k}  u(\frac{k}{n}),\;\; n\ge 1.$$
Then the  sequence $B_n(u) $ tends uniformly to $u,$ i.e., $\sup_{t\in [0,1]} \|B_n(u)(t)-u(t)\|_X \to 0,$ as $n\to +\infty.$
\end{lemma}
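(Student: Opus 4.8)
The plan is to adapt the classical proof of Bernstein's approximation theorem to the vector-valued setting; the argument transfers essentially verbatim because it relies only on the nonnegativity of the Bernstein weights and the triangle inequality in $X$. First I would record two elementary identities for the weights $p_{n,k}(t) = {n \choose k} t^k (1-t)^{n-k}$, namely
$$\sum_{k=0}^n p_{n,k}(t) = 1 \quad \mbox{and} \quad \sum_{k=0}^n \left(\frac{k}{n}-t\right)^2 p_{n,k}(t) = \frac{t(1-t)}{n} \le \frac{1}{4n},$$
the first expressing that $B_n$ reproduces constants and the second being the variance estimate for the binomial distribution. The first identity lets me write, for every $t \in [0,1]$,
$$B_n(u)(t) - u(t) = \sum_{k=0}^n p_{n,k}(t)\,\big(u(\tfrac{k}{n}) - u(t)\big),$$
so that by the triangle inequality $\|B_n(u)(t)-u(t)\|_X \le \sum_{k=0}^n p_{n,k}(t)\,\|u(\frac{k}{n})-u(t)\|_X$.

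Next I would exploit compactness: since $u$ is continuous on the compact interval $[0,1]$, it is both bounded, say $M := \sup_{s\in[0,1]}\|u(s)\|_X < \infty$, and uniformly continuous, so given $\epsilon > 0$ there is $\delta > 0$ with $\|u(s)-u(t)\|_X < \epsilon$ whenever $|s-t| < \delta$. I then split the sum according to whether $|\frac{k}{n}-t| < \delta$ or not. On the first set each summand is bounded by $\epsilon$, and since the weights are nonnegative and sum to at most $1$, this part contributes at most $\epsilon$. On the second set I bound each factor $\|u(\frac{k}{n})-u(t)\|_X$ by $2M$ and control the total weight by the second-moment estimate: for $|\frac{k}{n}-t|\ge\delta$ one has $1 \le \delta^{-2}(\frac{k}{n}-t)^2$, so
$$\sum_{|k/n - t|\ge \delta} p_{n,k}(t) \le \frac{1}{\delta^2}\sum_{k=0}^n \left(\frac{k}{n}-t\right)^2 p_{n,k}(t) \le \frac{1}{4n\delta^2}.$$

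Combining the two parts yields the uniform bound $\|B_n(u)(t)-u(t)\|_X \le \epsilon + \frac{M}{2n\delta^2}$, valid for all $t\in[0,1]$ simultaneously, since $\delta$ and $M$ depend only on $u$ and $\epsilon$ and not on $t$. Taking the supremum over $t$ and then letting $n\to\infty$ gives $\limsup_n \sup_{t\in[0,1]}\|B_n(u)(t)-u(t)\|_X \le \epsilon$; as $\epsilon>0$ is arbitrary, the claimed uniform convergence follows. I do not anticipate a genuine obstacle here: the only point requiring care is the verification of the variance identity, which can be obtained from the first identity by differentiating $\sum_k {n \choose k} t^k (1-t)^{n-k}$ twice in $t$ (or from the known first and second moments of the binomial law), and the extension to Banach-space-valued $u$ costs nothing, since boundedness, uniform continuity and the splitting argument use only the real, nonnegative weights together with $\|\cdot\|_X$.
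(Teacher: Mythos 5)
Your proof is correct and follows essentially the same route as the paper's: both write $B_n(u)(t)-u(t)=\sum_k p_{n,k}(t)\bigl(u(\tfrac{k}{n})-u(t)\bigr)$, split the sum according to whether $|\tfrac{k}{n}-t|$ is small (handled by uniform continuity) or large (handled by the variance identity $\sum_k(\tfrac{k}{n}-t)^2 p_{n,k}(t)=\tfrac{t(1-t)}{n}$ and a Chebyshev-type bound), arriving at the same uniform estimate $\epsilon+\frac{M}{2n\delta^2}$. The only differences are cosmetic (your limsup conclusion versus the paper's explicit choice of $N$, and your moment-differentiation derivation of the variance identity versus the paper's combinatorial identity $k\binom{n}{k}=n\binom{n-1}{k-1}$).
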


\begin{proof} (\cite{dav}, pp. 108-112). Let $\epsilon>0.$ By Heine's theorem, we have:
\begin{equation}\label{unif-cont}
\exists  \eta>0;\; \forall  t,s \in [0,1],\;\; |t-s|<\eta \Rightarrow \|u(t)-u(s)\|_X<\frac{\epsilon}{2}\cdot
\end{equation}
Now, let us observe that $$
B_n(u)(t) -u(t) =\sum_{k=0}^n \left(
                                                                 \begin{array}{c}
                                                                   n \\
                                                                   k \\
                                                                 \end{array}
                                                               \right)  t^k(1-t)^{n-k} (u(\frac{k}{n})-u(t))\cdot
$$
Then in order to estimate this last sum, we separate the terms for which $|\frac{k}{n} - t| <\eta$ and  those for which $|\frac{k}{n} - t| \ge \eta.$
Thus, we can write
$$
B_n(u)(t) -u(t)=\Sigma_1 + \Sigma_2,
$$
where
$$
\Sigma_1=\sum_{|\frac{k}{n} - t| <\eta} \left(
                                                                 \begin{array}{c}
                                                                   n \\
                                                                   k \\
                                                                 \end{array}
                                                               \right)  t^k(1-t)^{n-k} (u(\frac{k}{n})-u(t)),
$$
and $$
\Sigma_2=\sum_{|\frac{k}{n} - t| \ge \eta} \left(
                                                                 \begin{array}{c}
                                                                   n \\
                                                                   k \\
                                                                 \end{array}
                                                               \right)  t^k(1-t)^{n-k} (u(\frac{k}{n})-u(t))\cdot
$$
Using (\ref{unif-cont}) it comes,
\begin{equation}\label{sigma1}
\|\Sigma_1\|_X <\frac{\epsilon}{2}.
\end{equation}
For the remaining terms, we will establish the following estimate
\begin{equation}\label{Bern}
\|\Sigma_2\|_X  \le \frac{M}{2\eta^2n}, \;\; \mbox{with} \; M=\sup_{t\in [0,1]}\|u(t)\|_X\cdot
\end{equation}
  Letting $b_k(t)=(\begin{array}{c} n \\
  k \\
  \end{array}
  ) t^k (1-t)^{n-k},$  for all $ t\in [0,1]$ and  $n\ge1,$ one  can see that $\sum_{k=0}^{n} b_k(t)=1$ and  $b_k(t) \in [0,1], $  for all $ t\in [0,1]$. Moreover,  by making use of the relation: $k {\cal C}^k_n=n{\cal C}^{k-1}_{n-1}, \, 1\le k\le n, $ we can show that:
                                                                $\sum_{k=0}^n (
                                                                 \begin{array}{c}
                                                                   n \\
                                                                   k \\
                                                                 \end{array}
                                                               ) b_k(t)=t$
                                                               and     $\sum_{k=0}^n (
                                                                 \begin{array}{c}
                                                                   n \\
                                                                   k \\
                                                                 \end{array}
                                                               )^2 b_k(t)=\frac{(n-1) t^2 +t}{n}$,
                                                                which leads to the formula
$\displaystyle\sum_{k=0}^n (t-\frac{k}{n})^2 b_k(t)= \frac{t(1-t)}{n},$
from which we deduce that
$$\eta^2 \displaystyle\sum_{|\frac{k}{n} - t| \ge \eta} b_k(t) \le \frac{t(1-t)}{n} \;\;\mbox{and}\;\;  \displaystyle\sum_{|\frac{k}{n} - t| \ge \eta} b_k(t)  < \frac{1}{4n \eta^2},$$
which gives (\ref{Bern}). Finally, taking $N\in \mathbf{N}$ such that $\frac{M}{2\eta^2N} < \frac{\epsilon}{2}$, we get
 $$
\sup_{0\le t \le 1}\|B_n(u)(t) -u(t)\|_X < \epsilon,\;\; \forall n\ge N\cdot
$$
\end{proof}

\begin{remark}
For all $n\ge 1,$ we have (\cite{dav}, pp. 112-113)
\begin{equation}\label{deriv-Bern}
B_n(u)'(t)=n\sum_{k=0}^{n-1}  \left(
                                                                 \begin{array}{c}
                                                                   n-1 \\
                                                                   k \\
                                                                 \end{array}
                                                               \right) t^k(1-t)^{n-1-k} (u(\frac{k+1}{n})-u(\frac{k}{n})),
\end{equation}
where  $B_n(u)'(t)$ is the derivative of $B_n(u)(t)$ with respect to $t.$
\end{remark}


Let us now prove the following smoothness lemma.

\begin{lemma}\label{lemma2}
Let $\Omega$  be an open bounded set of $R^n, \; n\ge 1$. For all $h\in L^\infty(\Omega)$ such that $h\ge 0$, a.e. in $\Omega, $  there exists  $(h_r)\subset C^\infty(R^n)$ such that

 (i) $(h_r|_\Omega)$ is uniformly bounded with respect to $r$, (where $h_r|_\Omega$ designs the restriction of $h_r$ to $\Omega$),\\
 (ii) for all $r>0; \; h_r>0,\; $ a.e in $\overline{\Omega}, $\\
 and\\
 (iii) $h_r|_\Omega\to h $  in $L^2(\Omega), $ as $r\to 0^+$.
\end{lemma}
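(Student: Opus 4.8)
The plan is to construct $h_r$ by mollifying the zero-extension of $h$ and then adding a small vanishing positive constant to upgrade nonnegativity to strict positivity. First I would fix a standard mollifier $\rho\in C_c^\infty(\mathbf{R}^n)$ with $\rho\ge 0$, $\mathrm{supp}\,\rho\subset\{|x|\le 1\}$ and $\int_{\mathbf{R}^n}\rho\,dx=1$, and set $\rho_r(x)=r^{-n}\rho(x/r)$ for $r>0$. Since $\Omega$ is bounded, the zero-extension $\tilde h:=h\,{\bf 1}_\Omega$ belongs to $L^\infty(\mathbf{R}^n)\cap L^2(\mathbf{R}^n)$, is compactly supported, and satisfies $\tilde h\ge 0$ a.e. I would then define
$$
h_r:=\rho_r*\tilde h + r,\qquad 0<r\le 1.
$$
Because $\tilde h$ has compact support, the convolution $\rho_r*\tilde h$ lies in $C^\infty(\mathbf{R}^n)$, and adding the constant $r$ keeps $h_r\in C^\infty(\mathbf{R}^n)$.

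Next I would verify the three properties in turn. For (i), since $\rho_r\ge 0$ has unit mass, Young's inequality gives $\|\rho_r*\tilde h\|_{L^\infty(\mathbf{R}^n)}\le\|\tilde h\|_{L^\infty(\mathbf{R}^n)}=\|h\|_{L^\infty(\Omega)}$, hence $\|h_r|_\Omega\|_{L^\infty(\Omega)}\le\|h\|_{L^\infty(\Omega)}+1$ for all $0<r\le 1$, so $(h_r|_\Omega)$ is uniformly bounded in $r$. For (ii), the pointwise inequality $\rho_r*\tilde h\ge 0$ yields $h_r\ge r>0$ everywhere on $\mathbf{R}^n$, and in particular $h_r>0$ a.e.\ on $\overline{\Omega}$.

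For (iii) I would invoke the classical $L^2$-convergence of mollifications: since $\tilde h\in L^2(\mathbf{R}^n)$, one has $\rho_r*\tilde h\to\tilde h$ in $L^2(\mathbf{R}^n)$ as $r\to 0^+$. Restricting to $\Omega$, using $\tilde h|_\Omega=h$ and the triangle inequality,
$$
\|h_r|_\Omega-h\|_{L^2(\Omega)}\le\|(\rho_r*\tilde h)|_\Omega-h\|_{L^2(\Omega)}+r\,|\Omega|^{1/2}\longrightarrow 0,
$$
which establishes (iii).

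The only genuinely delicate point is the strict positivity required in (ii): plain mollification of a nonnegative function yields merely $h_r\ge 0$ (and $\rho_r*\tilde h$ may vanish, e.g.\ far outside $\mathrm{supp}\,\tilde h$), so the additive term $r$—or any $\varepsilon(r)>0$ with $\varepsilon(r)\to 0$—is the device that forces $h_r>0$ while, thanks to $\varepsilon(r)\to 0$ and the uniform $L^\infty$ control above, leaving both (i) and (iii) intact. Everything else is the standard mollifier machinery.
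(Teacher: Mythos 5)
Your proof is correct and follows essentially the same route as the paper's: both extend $h$ by zero, mollify with a standard compactly supported kernel, and add the constant $r$ to upgrade $\ge 0$ to strict positivity, with (i) and (iii) following from the unit mass of the mollifier and classical $L^2$-convergence of mollifications. Your write-up is in fact slightly more explicit than the paper's in checking that the added constant $r$ does not disturb properties (i) and (iii), which the paper leaves implicit.
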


\begin{proof}
Let us extend $h$  by $0$  to $R^n$ so that the obtained extension, still denoted by $h$, lies in $L^2(R^n)\cap L^\infty(R^n)$.\\
Let us introduce the following function
$$
\phi(x)=  \left\{
  \begin{array}{ll}
   c \: e^{\frac{1}{\|x\|^2-1} } , & \hbox{if} \; \|x\|<1 \\
    0, & \hbox{if} \;\|x\|\ge1
  \end{array}
\right.
$$
where $c$ is a positive constant such that: $\int_{R^n}\phi =1.$ For all $r>0$, let $\phi_r(x)=r^{-n} \phi(\frac{x}{r}), \;$ a.e. $ x\in R^n$ and let $k_r$ be the convolution of $h$ with $\phi_r; \, k_r=\phi_r \ast h.$ This directly yields  $k_r\in {\mathcal C}^\infty(R^n),\, k_r\ge 0$ a.e. in $\overline{\Omega} $ and $ k_r\to h$  in $L^2(\Omega), $ as $r\to 0^+$ (see \cite{brez}, pp. 69-71). Moreover, for every $r>0$ and for a.e. $x\in \Omega$, we have
$$
\begin{array}{cccc}
  k_r(x)  & = & c\: r^{-n} \displaystyle\int_{B(x,r)}  h(s) e^{\frac{1}{\|\frac{x-s}{r}\|^2-1}}  ds \\
  \\
   & \le & \displaystyle c \: \|h\|_{L^\infty(R^n)} \: \displaystyle\int_{B(O,1)} ds.
 \end{array}
$$
In other word, the sequence  $(k_r)$ is uniformly bounded with respect to $r$.
We conclude that $h_r := k_r+r, r >0$ satisfies the claimed properties.
\end{proof}

\section{The proof of the main results}\label{proofM}

In the sequel, we consider  the  system (\ref{Eq}) on  the Hilbert state space  $H:=L^2(\Omega)$ equipped with its natural norm denoted by $\|\cdot\|$, and let us introduce the unbounded operator $A=\Delta$ with domain $ {\mathcal D}(A)=  H_0^1(\Omega) \cap H^2(\Omega),  $ endowed with the following graph norm: $\|y\|_{{\mathcal D}(A)}=(\|y\|^2+\|Ay\|^2)^{\frac{1}{2}}, \;  y\in {\mathcal D}(A).$ The operator $A$ generates a contraction  semigroup  $S(t)$ in $H$. Then $A$ is dissipative, i.e., ${\cal R} e \langle Az,z\rangle \le 0, \; \forall z\in {\cal D}(A)$, and we have $ \sup_{t\ge 0} \|S(t)\| \le 1$ and $ \sup_{\lambda> 0} \|\lambda R(\lambda;A)\|\le 1$.

\subsection{Proof of Theorem \ref{thma}}
Let us first observe that in the case where $a(x)=0,\, $ a.e. $ x\in \Omega,$  one can just use the null control, since $S(T)y_0+\displaystyle\int_0^T S(T-s)f(s,y(s)) ds \to y_0,\; $ as $T\to 0^+.$  Indeed, observing that $S(T)y_0 \to y_0,\; $ as $T\to 0^+,$ it suffices to show that:  $ \displaystyle\int_0^T S(T-s)f(s,y(s)) ds \to 0,\; $ as $T\to 0^+.$ \\
Let $T>0. $ For all $t\in [0,T]$ and $ y\in L^2(\Omega)$, we have
$$
\begin{array}{ccc}
\|f(t,y)\| &\le& \|f(t,y)-f(0,0)\|  + \|f(0,0)\| \\
\\
&\le& L (t+\|y\|) + \|f(0,0)\|,
\end{array}
$$
where $L$ is a Lipschitz constant of $f$. It follows that
\begin{equation}\label{*f}
  \|f(t,y)\| \le  L (T +\|y\|) + \|f(0,0)\|,\; \forall t\in [0,T]\cdot
\end{equation}
The mild solution  $y$ satisfies  the following variation of constants formula
\begin{equation}\label{vcf-0}
y(t)= S(t)y_0+ \displaystyle \int_0^t S(t-s) \big (  f(s,y(s)) \big ) ds,\, \forall t\in [0,T]\cdot
\end{equation}
Thus, using (\ref{*f})   we have
 \begin{equation}\label{a=0}
  \displaystyle\int_0^T \|f(s,y(s)) ds\|
  \le  T(L+ \|f(0,0)\|)+ L\displaystyle \int_0^t  \|y(s)\|   ds.
 \end{equation}
Then, it comes from (\ref{vcf-0})
$$
\|y(t)\| \le \|y_0\|+ T(L+ \|f(0,0)\|)+ L \displaystyle\int_0^t  \|y(s)\|   ds,
$$
which gives via Gronwall's inequality
$$
 \|y(t)\| \le \bigg ( \|y_0\| + T (L+ \|f(0,0)\|) \bigg )  e^{T L}\cdot
$$
This together with (\ref{a=0}) and the fact that $S(t)$ is a semigroup of contractions, gives
$$
\|\displaystyle\int_0^T S(T-s)f(s,y(s)) ds\|
  \le  T(L+ \|f(0,0)\|) + LT \bigg ( \|y_0\| + T (L+ \|f(0,0)\|) \bigg )  e^{T L},
$$
which gives the claimed result. Thus we shall  in the following assume $a(\cdot)\neq 0$. Moreover, for a time of steering $T>0$ (which is to be determined later) we consider the  control $v(x,t)=v_T(x):=\frac{a(x)}{T}. $   Then the system (\ref{Eq}) admits a unique mild solution $y(t)$ in $[0,T_0] $ in the state space $L^2(\Omega)$ (see \cite{paz}, Theorem 1.2, p. 184), which is given by the following  variation of constants formula
\begin{equation}\label{vcf2}
y(t)= S(t)y_0+ \int_0^t S(t-s) \big ( \frac{a(x)}{T}  y(s) + f(s,y(s)) \big ) ds,\, \forall t\in [0,T_0]\cdot
\end{equation}
 Furthermore since  $a\in L^\infty(\Omega)$ and $f$ is Lipschitz, it follows from  Gronwall's inequality \cite{drag,ye07} that the mapping $y_0 \mapsto y(t)$ is Lipschitz in H.\\
Now, from  the assumptions (i)-(ii) of Theorem \ref{thma}, we can derive that: $ e^{a}  y_0=y^d$. Indeed, for a.e. $x\in \Omega$ the formulae is obvious for  $x\in O\cap \Lambda.$ Moreover, the case $x\not\in O$ follows from the fact that: $\{x\in \Omega/\; y_0(x)\ne y^d(x)\} \subset O$ a.e. Now if $x\not\in \Lambda, $ then $y_0(x)=0$ and so by assumption (ii) of Theorem \ref{thma}, we  have $ y^d(x)=0$. Thus $ e^{a}  y_0=y^d$ a.e. in $\Omega.$ Having in mind this, the idea of the proof consists on remarking  that if we, formally, use the following formula
$$y(T)-y^d=\int_0^Te^{\frac{T-s}{T}a(x)} \big ( A y(s) + f(s,y(s)) \big ) ds,$$
then it suffices to show that the term in the right-hand side of the last relation tends to zero in $L^2(\Omega)$ as $T\to 0^+.$  To prove this, we need to show that the mild solution $y(t)$ of (\ref{Eq}) can be approximated by a  classical one, and then we conclude by an argument of density. We will distinguish several cases.

\subsubsection{ The case $a\in W^{2,\infty}(\Omega)$ and $y_0\in {\mathcal D}(A)$}
This first case consists on three steps.

{\bf Step 1.} In order to approximate the mild solution $y(t)$ with a classical one, we will approximate the continuous function $ t \mapsto f(t,y(t))$ with a $C^1-$function.\\
Without loss of generality, we  assume in the sequel  that $T_0=1.$ Also, for any $T\in (0,T_0),$ the  letter $C$ will be used to denote a generic positive constant (which is independent of $T$).\\
Since $S(t)$ is a semigroup of contractions, we deduce from (\ref{vcf2}) that
$$
\|y(t)\|  \le \|y_0\| + \frac{\|a\|_{L^\infty(\Omega)}}{T}  \int_0^T  \|y(s)\| ds +  \int_0^t  \|f(s,y(s))\| ds\cdot
$$
Then, using  (\ref{*f}), it comes:
$$
\|y(t)\| \le \|y_0\| + T(L+ \|f(0,0)\|) + \big ( \frac{\|a\|_{L^\infty(\Omega)}}{T} +L \big )  \int_0^t  \|y(s)\| ds,
$$
which, by Gronwall's inequality, leads to
$$
\|y(t)\| \le \bigg ( \|y_0\| + T(L+ \|f(0,0)\|) \bigg ) e^{\big ( \|a\|_{L^\infty(\Omega)} + TL \big )}\cdot
$$
Hence there exists a positive constant $C=C(\|a\|_{L^\infty(\Omega)})$  (which is independent of $T\in (0,1)$) such that
\begin{equation}\label{y(t)-borne}
\|y (t)\| \le C (1+ \|y_0\|),\; \forall t\in  [0,T]\cdot
\end{equation}
Let us consider the continuous function $F: t \mapsto f(t,y(t))$. Then, using (\ref{*f}) and (\ref{y(t)-borne}) and the fact that $T<1,$ we get
\begin{equation}\label{F(t)-borne}
\|F(t)\| \le C (1 +   \|y_0\|),\; \forall t \in [0,T],
\end{equation}
where  $ C=C(\|a\|_{L^\infty(\Omega)})>0$ is independent of $T.$\\
Let us show that $F$ is Lipschitz in $[0,T].$ For all $h, t\in [0,T]$ such that $t+h\in [0,T],$ we have
\begin{equation}\label{y(t)-lipsch}
\begin{array}{ccc}
  y(t+h)-y(t) & = & S(t+h)y_0-S(t)y_0+\displaystyle\int_0^h S(t+h-s) (\frac{a}{T} y(s) +F(s)) ds \\
  \\
  & + & \displaystyle \int_0^t S(t-s) \bigg (\frac{a}{T}( y(s+h)-y(s)) + (F(s+h)-F(s)) \bigg )ds\cdot
\end{array}
 \end{equation}
Let us  estimate the first and the last terms of the right side of (\ref{y(t)-lipsch}). For the first term, we have (since $y_0\in {\cal D}(A)$):
$$
\|S(t+h)y_0-S(t)y_0\|=\|\int_t^{t+h}  S(s)Ay_0  ds\| \le h \|Ay_0\|\cdot
$$
Moreover, from the definition of $F$, we have:
$$\|F(s+h)-F(s)\| \le L\big ( h+\|y(s+h)-\|y(s)\| \big ),$$
where $L$ is a Lipschitz constant of $f$.\\
Then using the two last estimates and inequalities (\ref{y(t)-borne})-(\ref{F(t)-borne}) and the fact that $S(t) $ is a contraction semigroup,  we derive from (\ref{y(t)-lipsch})
$$
\begin{array}{ccc}
\|y(t+h)-y(t)\| &\le& h \|Ay_0\| +hC(\frac{\|a\|_{L^\infty(\Omega)}}{T} +1)(1+\|y_0\| ) \\
\\
&+&  \displaystyle\int_0^t \bigg ( L h+ (\frac{\|a\|_{L^\infty(\Omega)}}{T}+L) \|y(s+h)-y(s)\| \bigg )ds\cdot
\end{array}
$$
Then using  $0< T< 1 < \frac{1}{T}, $ we deduce that:
$$
\|y(t+h)-y(t)\|\le h  \frac{\|Ay_0\| + C(1+\|y_0\|)+ L }{T}   +  \frac{\|a\|_{L^\infty(\Omega)}+L}{T} \int_0^t    \|y(s+h)-y(s)\| ds,
$$
where $C=C(\|a\|_{L^\infty (\Omega)})$ is independent of $T$, which by Gronwall's inequality gives the following estimate
$$
\|y(t+h)-y(t)\|\le \frac{C(1+ \|y_0\|_{{\mathcal D}(A)})}{T} h, \;\; \forall t\in [0,T],
$$
where $ C=C(\|a\|_{L^\infty(\Omega)})$ is independent of $T$.\\
Then using the last estimate and the fact that $f$ is  Lipschitz, this gives
$$
\begin{array}{ccc}
  \|F(t)-F(s)\| &\le & L\big (  |t-s| +\|y(t)-y(s)\|\big )\\
\\
  &\le & L\big ( 1+   \frac{C(1+ \|y_0\|_{{\mathcal D}(A)})}{T} \big ) |t-s|, \;\; \forall t, s\in [0,T]\cdot
\end{array}
 $$
This leads (for $0<T<1$) to
\begin{equation}\label{lipsch-F}
\|F(t)-F(s)\|\le \frac{M_1}{T}|t-s|, \;\; \forall t, s\in [0,T],
\end{equation}
where  $M_1=M_1(\|a\|_{L^\infty (\Omega)},\|y_0\|_{{\mathcal D}(A)}). $  \\
Then given  $\epsilon>0,$ we have for $\eta:=\frac{T\epsilon e^{-\|a\|_{L^\infty (\Omega)}}}{4 M_1}$
\begin{equation}\label{F-lipsch}
\forall  t,s \in [0,T],\;\; |t-s|<\eta \Rightarrow \|F(t)-F(s)\|<\frac{\epsilon e^{-\|a\|_{L^\infty (\Omega)}}}{4}\cdot
\end{equation}
Using Lemma \ref{lemma3},   we can uniformly approach  $F(t)$ on $[0,1]$ with the following sequence of polynomials: $$F_n(t)=\sum_{k=0}^n \left(
                                                                 \begin{array}{c}
                                                                   n \\
                                                                   k \\
                                                                 \end{array}
                                                               \right)  t^k(1-t)^{n-k} F(\frac{k}{n}),\;\; n\ge 1\cdot$$
From the expression of $F_n(t)$, we  have by virtu of (\ref{F(t)-borne})
\begin{equation}\label{Fn}
\sup_{t\in [0,T]} \|F_n(t)\| \le C (1+  \|y_0\|),
\end{equation}
where  $ C=C(\|a\|_{L^\infty(\Omega)})>0$ is independent of $T\in (0,1).$\\
Moreover,  (\ref{sigma1}) $\&$ (\ref{Bern})  combined with  (\ref{F(t)-borne}) $\&$ (\ref{F-lipsch}), gives
\begin{equation}\label{Fn-F}
\|F_n(t) -F(t)\|\le \frac{\epsilon e^{-\|a\|_{L^\infty (\Omega)}}}{4} + \frac{C(1+  \|y_0\|)}{2 n\eta^2}, \;\; \forall t\in [0,T],
\end{equation}
where  $ C=C(\|a\|_{L^\infty(\Omega)})>0$ is independent of $T\in (0,1).$\\
Thus (recall that $\eta:=\frac{T\epsilon e^{-\|a\|_{L^\infty (\Omega)}}}{4 M_1}$)
$$
\|F_n(t) -F(t)\|\le  \frac{\epsilon e^{-\|a\|_{L^\infty (\Omega)}}}{4} +\frac{M_2}{n T^2\epsilon^2}, \;\; \forall t\in [0,T],
$$
where $M_2=M_2(\|a\|_{L^\infty (\Omega)},\|y_0\|_{{\mathcal D}(A)})$  is independent of $T$ and $n$.\\
Hence we have
$$
\|F_n(t) -F(t)\| <  \frac{\epsilon e^{-\|a\|_{L^\infty (\Omega)}}}{2},\; \forall t\in [0,T],
$$
whenever
\begin{equation}\label{n*T}
n T^2 > \frac{4 M_2e^{\|a\|_{L^\infty (\Omega)}}}{\epsilon^3}\cdot
\end{equation}
Let $y_n(t)$ be the  solution of the system:
\begin{equation}\label{Eqyn}
 \frac{d}{dt}y_n(t)=A y_n(t) +\frac{a(x)}{T} y_n(t)+F_n(t),\; t\in (0,1), \; y_n(0)=y_0\cdot
\end{equation}
Then  we have
\begin{equation}\label{yn(t)}
y_n(t)= S(t)y_0+ \int_0^t S(t-s) \big ( \frac{a(x)}{T}  y_n(s) + F_n(s) \big ) ds, \;\; \forall t\in [0,1]\cdot
\end{equation}
Thus, using   (\ref{Fn}) and the Gronwall inequality, we get
$$
\|y_n(t)\| \le C (1+\|y_0\|), \;\; \forall t\in [0,T],
$$
for some  constant $C=C(\|a\|_{L^\infty (\Omega)})$ which is independent of $T$ and $n$.\\
Moreover, it follows from (\ref{vcf2}) and (\ref{yn(t)}) that:
$$
\|y_n(t)-y(t)\| \le \int_0^t \frac{\|a\|_{L^\infty (\Omega)}}{T}\| y_n(s) -y(s)\|  ds   +  \int_0^T \|F_n(s)-F(s)\| ds, \;\; \forall t\in [0,T]\cdot
$$
Then under (\ref{n*T}) we have, $$\|y_n(t)-y(t)\|\le \int_0^t \frac{\|a\|_{L^\infty (\Omega)}}{T}  \|y_n(s)-y(s)\| ds + \frac{T\epsilon e^{-\|a\|_{L^\infty(\Omega)}}}{2},$$
which gives via Gronwall inequality
\begin{equation}\label{estim-error1}
 \|y(t)-y_n(t)\| < \frac{T \epsilon}{2},\;\; \forall t\in[0,T]\subset [0,1],
\end{equation}
and hence
$$
\sup_{t\in[0,T]} \|y(t)-y_n(t)\|< \frac{\epsilon}{2}\cdot
$$
{\bf Step 2.} Here, we will establish an upper bound for the solution $y_n(t)$ of (\ref{yn(t)}) with respect to the graph norm.\\
Since $y_0\in {\mathcal D}(A)$ and $F_n\in C^1([0,1];L^2(\Omega)),$ we have that $y_n(t)$ is a classical solution (see \cite{paz}, Theorem 1.5,  p. 187). Then for all $t>0$, we have $y_n(t)\in D(A)$ and
\begin{equation}\label{yn-PDE}
\frac{d}{dt} \langle y_n(t),\phi \rangle =   \langle y_n(t),\frac{a(x)}{T}\phi \rangle+ \langle A y_n(t) + F_n(t),\phi \rangle, \; \forall t\in (0,1), \; \forall \phi\in H\cdot
\end{equation}
In other words, $y_n(t)$ is a weak solution of (\ref{Eqyn}).  We know that  $Ay_n \in L^2(0,1;L^2(\Omega))$ (see for instance \cite{eva}, pp. 360-361). Hence  $y_n(t)$  (see \cite{bal78,bal1})  satisfies the following variation of constants formula:
\begin{equation}\label{yn-vcf}
y_n(t)=  e^{t\frac{a(x)}{T}} y_0+\int_0^te^{(t-s)\frac{a(x)}{T}} ( A y_n(s) + F_n(s))ds,\; \forall t\in [0,1]\cdot
\end{equation}
In particular, we have
\begin{equation}\label{vcf1}
y_n(T)-y^d=\int_0^Te^{\frac{T-s}{T}a(x)} (A y_n(s)+F_n(s))ds\cdot
\end{equation}
Applying the bounded operator $A_\lambda = \lambda R(\lambda;A) A$ to (\ref{yn(t)}), we get
$$
\begin{array}{lll}
   A_\lambda y_n(t)  &=& S(t)A_\lambda y_0+ \displaystyle \int_0^t A_\lambda S(t-s) (\frac{a(x)}{T}  y_n(s) + F_n(s)) ds\\
     &=&  S(t)A_\lambda y_0 + \displaystyle\frac{1}{T}  \int_0^t A_\lambda S(t-s)  (a(x)  y_n(s)) ds + \int_0^t \lambda R(\lambda,A) S'(t-s) F_n(s) ds\\
  &    = &S(t) A_\lambda y_0 + \displaystyle\frac{1}{T}  \int_0^t A_\lambda S(t-s)  (a(x)  y_n(s)) ds -
\int_0^t \frac{d}{ds} \big ( \lambda R(\lambda;A) S(t-s)  F_n(s) \big ) ds +\\
&& \int_0^t \lambda R(\lambda;A) S(t-s)  F'_n(s)  ds\\
&=& S(t)A_\lambda y_0 + \displaystyle\frac{1}{T}  \int_0^t A_\lambda S(t-s)  (a(x)  y_n(s)) ds +\\
&& \lambda R(\lambda;A) \big ( S(t) F_n(0)-F_n(t) \big ) + \int_0^t \lambda R(\lambda;A) S(t-s) F'_n(s)ds\cdot
\end{array}
$$
Since $y_n(t)\in {\mathcal D}(A)$ and $a\in W^{2,\infty}(\Omega), $ we also have  $ay_n(t)\in {\mathcal D}(A)$ for all $t\in [0,1].$ Hence, using the properties of the semigroup and the resolvent associated to $A,$ we deduce from the above expression that
$$\|A_\lambda y_n(t)\| \le  \|Ay_0\| +  \displaystyle\frac{1}{T} \int_0^t  \|   A (a(x)  y_n(s)) \| ds + \| F_n(t)\| +\|F_n(0)\| + \int_0^t \| F'_n(s)\| ds\cdot
$$
Then,  letting $\lambda\to +\infty,$ we get
\begin{equation}\label{5*}
\|A y_n(t)\| \le  \|Ay_0\| +  \frac{1}{T}  \int_0^t  \| A  (a(x)  y_n(s)) \| ds + \| F_n(t)\| +\|F_n(0)\| + \int_0^t \| F'_n(s) \| ds,
\end{equation}
where the constant $C=C(\|a\|_{W^{2,\infty}(\Omega)})$ is independent of $T$ and $n.$\\
Let us now study the terms of right hand of inequality (\ref{5*}). We have by (\ref{deriv-Bern}) that
$$
F'_n(t)=n\sum_{k=0}^{n-1}  \left(
                                                                 \begin{array}{c}
                                                                   n-1 \\
                                                                   k \\
                                                                 \end{array}
                                                               \right) t^k(1-t)^{n-1-k} (F(\frac{k+1}{n})-F(\frac{k}{n})),
$$
which by (\ref{lipsch-F}) gives
\begin{equation}\label{Fn'}
\sup_{0\le t \le T}\|F'_n(t)\|\le  \frac{M_1}{T}, \; (M_1=M_1(\|a\|_{L^{\infty}(\Omega)},\|y_0\|_{{\mathcal D}(A)}))\cdot
\end{equation}
Moreover, for every $y$ in $ H^2(\Omega)$, we have the following  second order Leibniz rule
\begin{equation}\label{A(ay)}
\Delta(a y)=  y  \:\Delta a  +2 \nabla a \cdot \nabla y + a \: \Delta y, \; \mbox{a.e. in } \;  \Omega,
\end{equation}
and
\begin{equation}\label{Gre-Pt}
\|\nabla y \| \le C\|\Delta y\|,
\end{equation}
for some constant $C>0$ which  depends only on $\Omega.$\\
Taking into account (\ref{Gre-Pt}) and the fact that $a\in W^{2,\infty}(\Omega)$, we derive from (\ref{A(ay)})
\begin{equation}\label{GP}
\int_0^t \|A  ( a(x)  y_n(s) )\| ds \le  \|\Delta a\|_{L^\infty(\Omega)} \int_0^t\|y_n(s)\| ds +C \int_0^t \|Ay_n(s) \| ds,\, \forall t\in [0,T],
\end{equation}
where $C=C(\|a\|_{W^{2,\infty}(\Omega)})$ is independent of $T$ and $n.$\\
Then reporting  (\ref{Fn}), (\ref{Fn'}) and (\ref{GP}) in (\ref{5*}),  we deduce, via  Gronwall's inequality
\begin{equation}\label{ynDA}
\|y_n(t)\|_{D(A)}\le M_3,\; \forall t\in [0,T],
\end{equation}
where $M_3=M_3(\|a\|_{W^{2,\infty}(\Omega)},\|y_0\|_{D(A)})$  is independent of $T$ and $n.$

{\bf Step 3.} We now show that, for $n$ large enough, one can choose  $T$ small enough so that $y_n(T)$ (and so is $y(T)$) approaches $y^d$ with any a priori fixed  precision.\\
Using the estimates (\ref{Fn}) and (\ref{ynDA}), we get from the relation (\ref{vcf1})
\begin{equation}\label{estim-error2}
  \|y_n(T)-y^d\|  \le  M_4 T
\end{equation}
for some   constant $M_4=M_4(\|a\|_{W^{2,\infty}(\Omega)},\|y_0\|_{D(A)})$ which is independent of $T$ and $n.$
We deduce that
$$
\|y_n(T)-y^d\| <\frac{\epsilon}{2},
$$
whenever
\begin{equation}\label{n**T}
0<T< \frac{\epsilon}{2M_4}\cdot
\end{equation}
Finally, we can observe that    $F_n$ depends implicitly via $y(t)$ on $T, $ but $n$ is independent of $T$. Then taking $n$ and $T$, respectively, such that $$
2(\frac{ M_2 e^{\|a\|_{L^\infty (\Omega)}}}{n\epsilon^3})^{\frac{1}{2}} <\frac{\epsilon}{2M_4}
$$
and
$$
2 (\frac{ M_2  e^{\|a\|_{L^\infty (\Omega)}}}{n\epsilon^3})^{\frac{1}{2}} < T <\inf(1,\frac{\epsilon}{2M_4}),
$$
so that (\ref{n*T}) and (\ref{n**T}) hold. Hence, we have
$$
\|y(T)-y^d\|\le \|y_n(T)-y(T)\|+ \|y_n(T)-y^d\| <\epsilon\cdot
$$

\subsubsection{The case $a\in W^{2,\infty}(\Omega)$ and $y_0\in L^2(\Omega)$}

For all $\lambda>0,$ we set  $\tilde{y}_{0\lambda}:=\lambda R(\lambda;A)y_{0}\in {\mathcal D}(A),$ and let  $\tilde{y}_{\lambda}$ be the mild solution to (\ref{Eq}) corresponding to the initial state  $\tilde{y}_{0\lambda}$ with   the  $\lambda-$independent control  $v(x,t)=\frac{1}{T}  \ln(\frac{y^d}{y_0}){\bf 1}_{\Lambda\cap O}$.
We have
\begin{equation}\label{0}
  \|y(T)-y^d\|\le \|y(T)-\tilde{y}_{\lambda}(T)\| + \|\tilde{y}_{\lambda}(T)-e^{a(x)} \tilde{y}_{0\lambda}\|+\|e^{a(x)} \tilde{y}_{0\lambda} - y^d\|\cdot
\end{equation}
It follows from the variation of constants formula that
$$
\tilde{y}_{\lambda}(t)-y(t)=S(t)\tilde{y}_{0\lambda}-S(t)y_0+\frac{1}{T} \int_0^t S(t-s) \bigg ( a(x) (\tilde{y}_{\lambda}(s)-y(s)) + f(s,\tilde{y}_{\lambda}(s))-f(s,y(s)) \bigg )ds\cdot
$$
Then, using  the contraction property of the semigroup $S(t)$, it comes
$$
\|\tilde{y}_{\lambda}(t)-y(t)\|\le \|\tilde{y}_{0\lambda}-y_0\|+\frac{\|a\|_{L^\infty(\Omega)}}{T} \int_0^t \|\tilde{y}_{\lambda}(s)-y(s)\| ds+L\int_0^t \|\tilde{y}_{\lambda}(s)-y(s)\| ds, \; \forall t\in [0,T]\cdot
$$
Thus, the Gronwall's inequality gives
$$
\|\tilde{y}_{\lambda}(T)-y(T)\|\le C \|\tilde{y}_{0\lambda}-y_0\|, \; (C=C(\|a\|_{L^\infty(\Omega)}))\cdot
$$
 Moreover, we have
  $$
 \|e^{a(x)} \tilde{y}_{0\lambda} - y^d\|\le e^{\|a\|_{L^\infty(\Omega)}} \|\tilde{y}_{0\lambda}-y_0\|,
 $$
we deduce that there is a $\lambda>0$, which is independent of $T\in (0,1), $ such that
\begin{equation}\label{1}
\|\tilde{y}_{\lambda}(T)-y(T)\| + \|e^{a(x)} \tilde{y}_{0\lambda} - y^d\|<\frac{\epsilon}{2}\cdot
\end{equation}
For such a $\lambda,$ we have: $$y_\lambda(T)-e^{a(x)} \tilde{y}_{0\lambda}=\int_0^Te^{\frac{T-s}{T}a(x)} \big ( A y_\lambda(s) + f(s,y_\lambda(s)) \big ) ds.$$
According to the case discussed in the previous subsection, there exists $0<T<1$ for which
\begin{equation}\label{2}
\|y_{\lambda}(T)-e^{a(x)} \tilde{y}_{0\lambda}\|<\frac{\epsilon}{2}\cdot
\end{equation}
From (\ref{0})-(\ref{2}), we conclude that
$$
\|y(T)-y^d\| <\epsilon\cdot
$$

\subsubsection{The general case: $a\in L^\infty(\Omega)$ and $y_0\in L^2(\Omega)$}

By Lemma \ref{lemma2},  there exists  $(h_r)\subset {\mathcal C}^{\infty}(\mathbf{R}^d)$ such that   $h_r|_\Omega \to h:=e^a$  in $ L^2(\Omega), $ as $r \to 0^+$ and $ h_r>0 $ for  a.e. in $\overline{\Omega}$.  Moreover, since $ e^a \in L^\infty(\Omega)$,   the sequence $(h_r)$ can be chosen such that  $\;(h_r|_\Omega)$ is   bounded in $\Omega$ uniformly w.r.t $r>0$. \\
 Let us define the function: $a_r=\ln(h_r) \in {\mathcal C}^{\infty}(\overline{\Omega}).$
   Since $a_r\in L^\infty(\Omega), $ there is a unique mild solution $y(t)$  of (\ref{Eq}) corresponding to the control $v(x,t)=\displaystyle\frac{a_r(x)}{T}$  and  initial state $y(0)=y_0,$ and $y$  depends continuously on $y_0.$
Let $(y_{0s})\in L^\infty(\Omega)$ be such that $y_{0s} \to y_0$ in $L^2(\Omega), $ as $s\to 0^+.$ Then we have
$$
\|y(T)-y^d\| \le \|y(T)-e^{a_r} y_0\|  + \|e^{a_r} y_0- e^{a_r} y_{0s}\|  + \|e^{a_r} y_{0s}-e^{a} y_{0s}\|   + \|e^{a} y_{0s}-e^{a} y_{0}\|\cdot
$$
Moreover,
$$
\|e^{a_r} y_0- e^{a_r} y_{0s}\| + \|e^{a} y_{0s}-e^{a} y_{0}\|\le \big ( \sup_{r>0} \|e^{a_r}\|_{L^\infty (\Omega)}  + e^{\|a\|_{L^\infty (\Omega)}} \big )   \|y_{0s}-y_0\|\cdot
$$
Since $(a_r)$ is uniformly bounded w.r.t $r$, there exists $s>0$ be such that
$$
\|e^{a_r} y_0- e^{a_r} y_{0s}\|     + \|e^{a} y_{0s}-e^{a} y_{0}\| <\frac{\epsilon}{3},
$$
and for such value of $s,$ we consider a $r>0$  such that
$$
\|e^{a_r} -e^{a}\| \| y_{0s}\|_{L^\infty(\Omega)} <  \frac{\epsilon}{3}\cdot
$$
Finally, for this value of $r$, it comes from the case of the previous subsection that there exists $T>0$ such that $$
\|y(T)-e^{a_r} y_0\|  < \frac{\epsilon}{3}.
$$
Hence we have $\|y(T)-y^d\| < \epsilon\cdot$

\subsection{Proof of  Corollary \ref{cor1}}

We know from Lemma \ref{lemma2} that there exists $(h_r)\subset {\mathcal C}^{\infty}(\mathbf{R}^d)$  such that for all $r>0, $ we have $ h_r>0, $ a.e. $\overline{\Omega}$ and $h_r|_\Omega \to h$ in $L^2(\Omega), $ as $r \to 0^+.$
Let $\epsilon>0$ be fixed, and let $r>0$ be such that
  $$\|h_r -h\|\|y_0\|_{L^\infty(\Omega)}<\frac{\epsilon}{2}\cdot$$
Using the control $v(x,t)=\displaystyle \frac{a_r(x)}{T_1} $ with $0<T_1=T_1(\epsilon,y^d,y_0)<T$ is small enough and $ a_r := \ln(h_r)\in {\mathcal C}^{\infty}(\overline{\Omega}),$
  we get from the proof of Theorem \ref{thma}
$$
\|y(T_1)-e^{a_r} y_0\| < \frac{\epsilon}{2}\cdot
$$
Hence
$$
\begin{array}{lll}
  \|y(T_1)-y^d\| & \le & \|y(T_1)-h_r y_0\| + \|h_r y_0-h y_0\|
 \\
   & \le &  \|y(T_1)-e^{a_r} y_0\| + \|h_r -h \| \, \|y_0\|_{L^\infty(\Omega)}
 \\
  & < &  \epsilon.
\end{array}
$$
This completes the proof.

\subsection{Proof of Corollary \ref{cor2}}

The idea here consists on looking for a control that make the  system (\ref{Eq}) in the form of a bilinear system. Indeed, let us observe that the system (\ref{Eq}) (with $O=\Omega$) can be  written  (at least formally) as follows
\begin{equation}\label{PRE}
\left\{
  \begin{array}{ll}
    y_{t}=\Delta y + (v(x,t)+\frac{f(t,y)}{y})y, & \mbox{in}\;  Q_T  \\
   y(0,t) = 0, & \mbox{on}\;  \Sigma_T      \\
    y(x,0) = y_{0}(x), & \mbox{in}\;  \Omega
\end{array}
\right.
\end{equation}
This leads us to consider the following bilinear system
\begin{equation}\label{pre3}
\left\{
  \begin{array}{ll}
\varphi_{t}=\Delta \varphi + q(x,t)\varphi, & \mbox{in}\;  Q_T \\
\varphi(0,t) = 0, & \mbox{on}\;  \Sigma_T      \\
\varphi(x,0) = y_{0}(x), & \mbox{in}\;  \Omega
\end{array}
\right.
\end{equation}
which is the homogeneous version of the system (\ref{Eq}) (i.e. $f=0$).\\
Let $T, \epsilon>0$ be fixed.  According to Theorem \ref{thma}, applied for $f= 0$, there are   $0<T_1=T_1(y_0,y^d,\epsilon)<T$ and a static control $q(x,t)=q_{1}(x)\in L^\infty(\Omega)$   such that the corresponding state $\varphi$ to system (\ref{pre3}) satisfies the following estimate
 \begin{equation}
 \label{*}
   \|\varphi(T_1)-y^d\| <  \epsilon\cdot
 \end{equation}
(a)  Assume  that: $y^d\in H^2(\Omega) $ and $   g(x) :=-\frac{\Delta y^d}{y^d} {\bf 1}_\Lambda \in L^\infty(\Omega).$\\
Let us consider the following control
\begin{equation}\label{q(x,t)}
  q(x,t)=\left\{
  \begin{array}{ll}
    q_1(x), & t\in [0,T_1) \\
    \\
   g(x), & t\in [T_1,T]
  \end{array}
\right.
\end{equation}
In other words; $q(x,\cdot)=q_1(x) {\bf 1}_{[0,T_1)}+   g(x) {\bf 1}_{[T_1,T]}.$\\
Let us observe that under the assumption (ii) of Theorem \ref{thma}, we have $\Lambda=\{x \in \Omega/ y_0(x)\ne 0 \}= \{x \in \Omega/ y^d(x)\ne 0 \}$. Moreover,  having in mind that $g\in L^\infty(\Omega)$ and that $\Delta y^d =0$  a.e. in $\{x \in \Omega/ y^d(x)= 0 \}$ (see \cite{att}, pp. 210-211), we deduce that $\Delta y^d (x) + g(x) y^d(x) =0,\; a.e.\;  x\in\;  \Omega$ (see also \cite{ouz16}). Hence, with the control $q(x,t)=g(x), \; t\in (T_1,T]$  the state $y^d$ becomes  an equilibrium for the following system
$$\left\{
  \begin{array}{ll}
\phi_{t}=\Delta \phi + g(x)\phi, & \mbox{in}\;  \Omega \times (T_1,T)  \\
\\
\phi(T_1)=y^d, & \mbox{in}\;  \Omega
\end{array}
\right.
$$
In other words, we have
$$
y^d=S(t-T_1)y^d+\int_{T_1}^t S(t-s) (g(x) y^d) ds, \; t\in [T_1,T]\cdot
$$
Thus for all $t\in [T_1,T]$, we have
$$
\varphi(t)-y^d=S(t-T_1) (\varphi(T_1)-y^d ) +\int_{T_1}^t  S(t-s) \bigg ( g(x) (\varphi(s)-y^d ) \bigg ) ds,\; t\in [T_1,T]\cdot
 $$
Then using the fact that $S(t)$ is a contraction semigroup, $ g \in L^\infty(\Omega)$ and $f$ is $L-$Lipschitz, it comes
$$
\|\varphi(t)-y^d\| \le \|\varphi(T_1)-y^d \| + \|g\|_{L^\infty (\Omega)} \int_{T_1}^t \|\varphi(s)-y^d \| ds,\; \forall t\in [T_1,T]\cdot
 $$
Thus Gronwall inequality yields
$$\|\varphi(T)-y^d\| \le e^{T\|g\|_{L^\infty(\Omega)}}   \| \varphi(T_1)-y^d \|. $$
This together with (\ref{*}) gives the approximate steering for the system (\ref{pre3}) at time $T.$\\
Now let $\varphi$ be the solution of (\ref{pre3}) corresponding to the steering control $q$ defined by (\ref{q(x,t)}), and let us return to the whole system (\ref{Eq}), which we intend to excite by the following control
\begin{equation}\label{vGlob}
v(x,t)=q(x,t)-\displaystyle\frac{f(t,\varphi)}{\varphi} {\bf 1}_E,
\end{equation}
where $E=\{(x,t)\in Q_T:\; \varphi(x,t)\ne0\}.$ This leads us  to study the  following  system
\begin{equation}\label{pre2}
\left\{
  \begin{array}{ll}
y_{t}=\Delta y + (q(x,t)-\frac{f(t,\varphi)}{\varphi}{\bf 1}_E)y + f(t,y), & \mbox{in}\;  Q_T  \\
y(0,t) = 0, & \mbox{on}\;  \Sigma_T       \\
y(x,0) = y_{0}(x), & \mbox{in}\;  \Omega.
\end{array}
\right.
\end{equation}
By assumption, we have that for every $y\in L^2(\Omega); \;  |f(t,y)(x)|\le C |y(x)|,\; $ for all $t\in (0,T)$ and for a.e. $\; x\in \Omega$.
Then, it is apparent that $\varphi$ is  a solution of (\ref{pre2}) and is such that $\displaystyle\frac{f(t,\varphi)}{\varphi}{\bf 1}_E \in L^{\infty}(Q_T)$. Hence, by uniqueness (observe that $v\in L^\infty(Q_T)$) we have that $y=\varphi$ is the unique solution of (\ref{pre2}), and hence   $y(T)=\varphi(T)$ approaches $ y^d$ with any a priori fixed  precision.\\

(b)  Assume that:  $y^d \in L^2(\Omega) $ and $y^d\ge 0,\; $ a.e. $x\in\Omega$.\\

Since $y^d\ge 0,$ there exists  $y_\epsilon \in C^\infty (R^d) $ such that $y_\epsilon >0,\; $ a.e. $x\in  \bar{\Omega}$  and
$$\|y^d-y_\epsilon\|< \frac{\epsilon}{2}.$$
We have: $y_\epsilon >\alpha > 0,\; $ a.e. $x\in\Omega$ (where $\alpha=\displaystyle \sup_{x\in \overline{\Omega}} y_\epsilon (x)$). Thus $|\frac{\Delta y_\epsilon }{y_\epsilon}| \le \frac{|\Delta y_\epsilon|}{\alpha},\; a.e.\;  in \;\;\Omega.$\\
This together with the fact that $\Delta y_\epsilon $ is continuous in the bounded set $\bar{\Omega}$, implies that
$\frac{\Delta y_\epsilon }{y_\epsilon}\in L^\infty(\Omega)$. Hence, from the Case (a), there is a control $v_\epsilon\in L^\infty(Q_T)$ such that:
$$\|y(T)-y_\epsilon\|<\frac{\epsilon}{2}.$$
Then we have
$$\|y(T)-y^d\|\le \|y(T)-y_\epsilon\|+ \|y_\epsilon-y^d\|<\epsilon,$$
which achieves the proof.

\end{document}